\documentclass[letterpaper,12pt]{article}

\usepackage{amsmath,amssymb,amscd,latexsym,dsfont,wasysym,bbm}
\usepackage{float}
\usepackage{color}
\usepackage{graphics}
\usepackage{graphicx}
\usepackage{subfigure}
\usepackage{multirow,multicol} 
\usepackage{verbatim}
\usepackage{psfrag}
\usepackage{comment}
\usepackage{makeidx}
\usepackage[]{algorithm}
\usepackage{algorithmicx}
\usepackage{algpseudocode}
\usepackage{cases}
 \usepackage{setspace}
\usepackage{pgfplots} 
\newcommand{\tr}[1]{\textrm{#1}}
\newcommand{\mr}[1]{\mathrm{#1}}
\newcommand{\tnr}[1]{{\textnormal{#1}}}

\newcommand{\mc}[1]{\mathcal{#1}}
\newcommand{\mf}[1]{\mathsf{#1}}
 
\newcommand{\ms}[1]{\mathds{#1}}

\newcommand{\bH}{\boldsymbol{H}}

\newcommand{\bk}{\boldsymbol{k}}

\newcommand{\balpha}{\boldsymbol{\alpha}}

\newcommand{\bdelta}{\boldsymbol{\delta}}
\newcommand{\bepsilon}{\boldsymbol{\epsilon}}

\newcommand{\btheta}{\boldsymbol{\theta}}

\newcommand{\bxi}{\boldsymbol{\xi}}

\newcommand{\ie}{i.e.,~} 		
\newcommand{\eg}{e.g.,~}	

\newcommand{\argmax}{\mathop{\mr{argmax}}}

\newcommand{\set}[1]{\{#1\}}
\newcommand{\SET}[1]{\left\{#1\right\}}

\newcommand{\cd}{\cdot}
\newcommand{\ld}{\ldots}

\newcommand{\e}{\mr{e}}

\newcommand{\PR}[1]{\Pr\SET{#1}}       	

\newcommand{\Ex}{\ms{E}}     			
\newcommand{\Var}{\ms{V}\tr{ar}}     			

\newcommand{\mcN}{\mc{N}}

\newcommand{\mcY}{\mc{Y}}

\newcommand{\mfP}{\mf{P}}

\usepackage[title]{appendix}

\usepackage{dgjournal}   
\usepackage[authoryear,comma,round]{natbib}

\usepackage{amsthm}

\pgfplotsset{compat=1.12}
\usetikzlibrary{positioning,shadows,shapes,fit,arrows,backgrounds}

\tikzstyle{rect_my} = [draw, rectangle, minimum width=2cm, text width=1.8cm, fill=gray!15, 
  text centered,  minimum height=.9cm]
\tikzstyle{square_my} = [draw, rectangle, minimum width=1cm, text width=0.8cm, fill=gray!15, 
  text centered,  minimum height=.9cm]
\tikzstyle{square_my_graph} = [draw, rectangle, minimum width=1.2cm, text width=1cm, fill=gray!15, 
  text centered,  minimum height=1.2cm]
\tikzstyle{circle_my} = [draw, circle, minimum width=1cm, text width=0.8cm, fill=gray!15, 
  text centered,  minimum height=.9cm]
\tikzstyle{circle_my_graph} = [draw, circle, minimum width=1.1cm, text width=.8cm, fill=gray!15, 
  text centered]
\tikzstyle{cloud_my} = [draw, shape=cloud, minimum width=1cm, text width=0.8cm, fill=gray!15, 
  text centered,  minimum height=1cm]

\tikzstyle{point_my} = [draw=none, minimum width=0cm, text width=0cm, fill=none, 
  text centered,  minimum height=0cm]    
\tikzstyle{line_my} = [draw, -latex]    
\tikzstyle{box_my}=[draw, minimum size=2em, text width=4.5em, text centered]
\tikzstyle{bigbox_my}=[draw, inner sep=15pt]
\tikzstyle{arrow_my} = [thick,->,>=stealth]
\tikzstyle{noarrow_my} = [thick,-,=>stealth]

\usepackage[acronym,nonumberlist]{glossaries} 
\usepackage{url}
\usepackage{enumitem}
\setlist[itemize]{leftmargin=*,topsep=2pt,itemsep=2pt,parsep=0pt}
\makeatletter\g@addto@macro\UrlBreaks{\do\-}\makeatother 

\newacronym[\glsshortpluralkey=PDFs,\glslongpluralkey=probability density functions]{pdf}{PDF}{probability density function}
\newacronym[\glsshortpluralkey=CDFs,\glslongpluralkey=cumulative density functions]{cdf}{CDF}{cumulative density function}
\newacronym[\glsshortpluralkey=CCDFs,\glslongpluralkey=complementary cumulative density functions]{ccdf}{CDF}{complementary cumulative density function}
\newacronym[\glsshortpluralkey=PMFs,\glslongpluralkey=probability mass functions]{pmf}{PMF}{probability mass function}
\newacronym[]{lhs}{l.h.s.}{left-hand side}
\newacronym[]{rhs}{r.h.s.}{right-hand side} 

\newacronym[]{bicm}{BICM}{bit-interleaved coded modulation}
\newacronym[]{bicmid}{BICM-ID}{BICM with iterative demapping}
\newacronym[]{cm}{CM}{coded modulation}
\newacronym[]{tcm}{TCM}{trellis-coded modulation}
\newacronym[]{mlc}{MLC}{multi-level coding}
\newacronym[]{pam}{PAM}{pulse amplitude modulation}
\newacronym[]{bpsk}{BPSK}{binary phase shift keying}
\newacronym[]{qam}{QAM}{quadrature amplitude modulation}
\newacronym[]{16qam}{16-QAM}{16-points quadrature amplitude modulation}
\newacronym[]{psk}{PSK}{phase shift keying}
\newacronym[\glsshortpluralkey=LLRs,\glslongpluralkey=logarithmic likelihood ratios]{llr}{LLR}{logarithmic likelihood ratio}
\newacronym[]{oc}{OC}{operating characteristic}

\newacronym[\glsshortpluralkey=MIs,\glslongpluralkey=mutual informations]{mi}{MI}{mutual information}
\newacronym[\glsshortpluralkey=GMIs,\glslongpluralkey=generalized mutual informations]{gmi}{GMI}{generalized mutual information}
\newacronym[]{eesm}{EESM}{exponential effective-SNR-mapping}
\newacronym[]{bicm-gmi}{BICM-GMI}{BICM generalized mutual information}
\newacronym[]{awgn}{AWGN}{additive white Gaussian noise}
\newacronym[]{bsc}{BSC}{binary symetric channel}
\newacronym[]{amc}{AMC}{adaptive modulation and coding}
\newacronym[]{csi}{CSI}{channel state information}
\newacronym[]{cqi}{CQI}{channel quality indicator}
\newacronym[]{kl}{KL}{Kullback-Leibler}
\newacronym[]{cmm}{CMM}{circular moment matching}
\newacronym[]{ga}{GA}{Gaussian approximation}

\newacronym[]{sp}{SP}{set-partitioning}
\newacronym[]{gsm}{GSM}{global system for mobile communications}
\newacronym[]{edge}{EDGE}{enhanced data rates for GSM evolution}
\newacronym[]{3gpp}{3GPP}{3rd generation partnership project}
\newacronym[]{umts}{UMTS}{Universal Mobile Telecommunication System}
\newacronym[]{lte}{LTE}{Long Term Evolution}
\newacronym[]{dvb}{DVB}{digital video broadcasting}
\newacronym[]{fdd}{FDD}{Frequency Division Duplexing}

\newacronym[\glsshortpluralkey=CCs,\glslongpluralkey=convolutional codes]{cc}{CC}{convolutional code}
\newacronym[\glsshortpluralkey=PCCCs,\glslongpluralkey=parallel concatenated convolutional codes]{pccc}{PCCC}{parallel concatenated convolutional code}
\newacronym[\glsshortpluralkey=TCs,\glslongpluralkey=turbo codes]{tc}{TC}{turbo code}
\newacronym{ldpc}{LDPC}{low-density parity-check}
\newacronym[]{ofdm}{OFDM}{orthogonal frequency-division multiplexing}
\newacronym[]{bep}{BEP}{bit-error probability}
\newacronym[]{wep}{WEP}{word-error probability}
\newacronym[]{sep}{SEP}{symbol-error probability}
\newacronym[]{pep}{PEP}{pairwise-error probability}
\newacronym[]{ttcm}{TTCM}{turbo-trellis coded modulation}
\newacronym[]{uep}{UEP}{unequal error protection}
\newacronym[\glsshortpluralkey=CENCs,\glslongpluralkey=convolutional encoders]{cenc}{CENC}{convolutional encoder}
\newacronym[]{mimo}{MIMO}{multiple-input multiple-output}
\newacronym[\glsshortpluralkey=SNRs,\glslongpluralkey=signal-to-noise ratios]{snr}{SNR}{signal-to-noise ratio}
\newacronym[\glsshortpluralkey=SINRs,\glslongpluralkey=signal-to-interference-plus-noise ratios]{sinr}{SINR}{signal-to-interference-plus-noise ratio}
\newacronym[]{msb}{MSB}{most-significative bit}
\newacronym[]{bcjr}{BCJR}{Bahl--Cocke--Jelinek--Raviv}
\newacronym[]{cbc}{CBC}{Colavolpe--Barbieri--Caire}
\newacronym[]{skr}{SKR}{Shayovitz--Kreimer--Raphaeli}
\newacronym[\glsshortpluralkey=SEDs,\glslongpluralkey=squared Euclidean distances]{sed}{SED}{squared Euclidean distance}
\newacronym[\glsshortpluralkey=EDs,\glslongpluralkey=Euclidean distances]{ed}{ED}{Euclidean distance}
\newacronym[\glsshortpluralkey=MEDs,\glslongpluralkey=minimum Euclidean distances]{med}{MED}{minimum Euclidean distance}
\newacronym[]{core}{CoRe}{constellation rearrangement}
\newacronym[]{pdl}{PDL}{parallel decoding of the individual levels}
\newacronym[\glsshortpluralkey=GCs,\glslongpluralkey=Gray codes]{gc}{GC}{Gray code}
\newacronym[]{brgc}{BRGC}{binary-reflected Gray code}
\newacronym[]{nbc}{NBC}{natural binary code}
\newacronym[]{fbc}{FBC}{folded-binary code}
\newacronym[]{bsgc}{BSGC}{binary semi-Gray code}
\newacronym[]{msp}{MSP}{modified set-partitioning}
\newacronym[]{ssp}{SSP}{semi set-partitioning}
\newacronym[]{fhd}{FHD}{free Hamming distance}
\newacronym[]{mfhd}{MFHD}{maximum free Hamming distance}
\newacronym[]{ods}{ODS}{optimal distance spectrum}
\newacronym[]{iud}{i.u.d.}{independent and uniformly distributed}
\newacronym[]{ud}{u.d.}{uniformly distributed}
\newacronym[]{iid}{i.i.d.}{independent, identically distributed}
\newacronym[]{ami}{AMI}{accumulated mutual information}
\newacronym[]{bico}{BICO}{binary-input continuous-output}
\newacronym[]{gh}{GH}{Gauss--Hermite}
\newacronym[]{gum}{GUM}{Gaussian--uniform mixture}

\newacronym[\glsshortpluralkey=BSs,\glslongpluralkey=base-stations]{bs}{BS}{base-station}
\newacronym[\glsshortpluralkey=MSs,\glslongpluralkey=mobile-stations]{ms}{MS}{mobile-stations}

\newacronym[]{phy}{PHY}{physical layer} 
\newacronym[]{rlc}{RLC}{Radio-Link control} 
\newacronym[]{ran}{RAN}{Radio Access Network} 
\newacronym[]{llc}{LLC}{logical link control} 
\newacronym[]{tcp}{TCP}{transmission control protocol} 
\newacronym[]{mac}{MAC}{media access control} 
\newacronym[]{fft}{FFT}{fast Fourier transform} 
\newacronym[]{ft}{FT}{Fourrier transform}
\newacronym[]{cf}{CF}{characteristic function} 
\newacronym[]{mgf}{MGF}{moment generating function} 
\newacronym[]{ee}{EE}{energy efficiency} 
\newacronym[]{eb}{EB}{energy per bit}
\newacronym[]{kkt}{KKT}{Karush--Kuhn--Tucker} 
\newacronym[]{mcs}{MCS}{modulation/coding scheme} 
\newacronym[]{fec}{FEC}{forward error correction}
\newacronym[]{arq}{ARQ}{automatic repeat request}
\newacronym[]{harq}{HARQ}{hybrid ARQ}
\newacronym[]{tarq}{TARQ}{truncated HARQ}
\newacronym[]{ir}{IR}{incremental redundancy}
\newacronym[]{rpr}{RR}{repetition redundancy}
\newacronym[]{rrharq}{RR-HARQ}{repetition redundancy HARQ}
\newacronym[]{irharq}{IR-HARQ}{incremental redundancy HARQ}
\newacronym[]{ack}{ACK}{positive acknowledgment}
\newacronym[]{nack}{NACK}{negative acknowledgment}
\newacronym[]{hol}{HoL}{head of the line}
\newacronym[]{crc}{CRC}{cyclic redundancy check}
\newacronym[]{dp}{DP}{dynamic programming}
\newacronym[]{gp}{GP}{geometric programming}
\newacronym[]{per}{PER}{packet error rate}
\newacronym[]{ber}{BER}{bit error rate}
\newacronym[]{op}{OP}{outage probability}
\newacronym[]{spa}{SPA}{saddle-point approximation}
\newacronym[]{mrc}{MRC}{maximum ratio combining}
\newacronym[]{mdp}{MDP}{Markov decision process}
\newacronym[]{lp}{LP}{linear programming}
\newacronym[]{pomdp}{POMDP}{partially observable Markov decision process}
\newacronym[]{psimdp}{PSI-MDP}{partial state information Markov decision process}
\newacronym[]{scpp}{SCPP}{stochastic shortest path problem}

\newacronym[]{forw}{frwd}{forward}
\newacronym[]{feed}{fdbk}{feedback}

\newacronym[]{mm}{MM-HARQ}{multi-message HARQ}
\newacronym[]{xp}{XP-HARQ}{cross-packet HARQ}
\newacronym[]{ts}{TS}{time-sharing}
\newacronym[]{sc}{SC}{superposition coding}
\newacronym[]{sbrq}{SBRQ}{systematic backward retransmission}
\newacronym[]{brq}{BRQ}{backward retransmission}
\newacronym[]{lharq}{L-HARQ}{layer-coded HARQ}
\newacronym[]{anlharq}{AoN-HARQ}{all-or-none L-HARQ}
\newacronym[]{vlharq}{VL-HARQ}{variable-length HARQ}

\newacronym[]{pp}{PPP}{point process}
\newacronym[]{ppp}{PPP}{Poisson point process}

\newacronym[]{fide}{FIDE}{F\'ed\'eration Internationale des \'Echecs}
\newacronym[]{fifa}{FIFA}{F\'ed\'eration Internationale de Football Association}
\newacronym[]{fivb}{FIVB}{F\'ed\'eration Internationale de Volleyball}
\newacronym[]{epl}{EPL}{English Premier League}
\newacronym[]{nhl}{NHL}{National Hockey League}
\newacronym[]{shl}{SHL}{Swedish Hockey League}
\newacronym[]{nfl}{NFL}{National Football League}
\newacronym[]{ipl}{IPL}{Indian Premier League}
\newacronym[]{nba}{NBA}{National Basketball Association}
\newacronym[]{mls}{MLS}{Major League Soccer}

\newacronym[]{sg}{SG}{stochastic gradient}
\newacronym[]{lms}{LMS}{least mean squares}
\newacronym[]{nlms}{NLMS}{normalized LMS}
\newacronym[]{rls}{RLS}{recursive least squares}
\newacronym[]{vss}{VSS}{variable step-size}
\newacronym[]{hfa}{HFA}{home-field advantage}
\newacronym[]{ha}{HA}{home advantage}
\newacronym[]{mov}{MOV}{margin of victory}
\newacronym[]{ac}{AC}{adjacent categories}
\newacronym[]{cl}{CL}{cumulative link}
\newacronym[]{ci}{CI}{confidence interval}
\newacronym[]{glm}{GLM}{generalized linear models}
\newacronym[]{nn}{NN}{neural networks}
\newacronym[]{rps}{RPS}{ranked probability score}
\newacronym[]{mse}{MSE}{mean square error}
\newacronym[]{mmse}{MMSE}{minimum mean square error}
\newacronym[]{rmse}{RMSE}{root mean squared error}
\newacronym[]{ols}{OLS}{ordinary least squares}
\newacronym[]{map}{MAP}{maximum a posteriori}
\newacronym[]{ml}{ML}{maximum likelihood}
\newacronym[]{loo}{LOO}{leave-one-out}
\newacronym[]{alo}{ALO}{approximate leave-one-out}
\newacronym[]{logo}{LOGO}{leave-one-game-out}
\newacronym[]{alogo}{ALOGO}{approximate leave-one-game-out}
\newacronym[]{msd}{MSD}{mean-square deviation}
\newacronym[]{lop}{LOP}{linear ordering problem}
\newacronym[]{so}{SO}{shootouts}
\newacronym[]{rt}{RT}{regulation time}
\newacronym[]{ot}{OT}{overtime}
\newacronym[]{rr}{RR}{round-robin}
\newacronym[]{irt}{IRT}{item-response theory}

\newacronym[]{dmp}{DMP}{discretized message passing}
\newacronym[]{mp}{MP}{message passing}
\newacronym[]{ep}{EP}{expectation propagation}
\newacronym[]{em}{EM}{expectation maximization}
\newacronym[]{hmm}{HMM}{hidden Markov models}

\newacronym[]{svd}{SVD}{singular values decomposition}

\newacronym[]{vkf}{vKF}{\emph{vector-variance} Kalman Filter}
\newacronym[]{skf}{sKF}{\emph{scalar-variance} Kalman Filter}
\newacronym[]{fkf}{fKF}{\emph{fixed-variance} Kalman Filter}
\newacronym[]{kf}{KF}{Kalman filter}
\newacronym[]{gelo}{G-Elo}{generalized Elo}
\newacronym[]{mvdr}{MVDR}{minimum variance distortionless response}
\newacronym[]{lcmv}{LCMV}{linearly-constrained minimum variance}
\newacronym[]{music}{MUSIC}{multiple signal classification}
\newacronym[]{cp}{CP}{canonical polyadic}

\newacronym[]{tpb}{TPB}{tensor-product-basis}

\newtheorem{corollary}{Corollary}

\newtheorem{proposition}{Proposition}
\newtheorem{lemma}{Lemma}

\begin{document}

\title{
Ranking by points and ordinal models
}
\author{Leszek Szczecinski
\thanks{%
L.~Szczecinski  is with Institut National de la Recherche Scientifique, Montreal, Canada. [e-mail: Leszek.Szczecinski@inrs.ca].}%
}%

\originalmaketitle  

\setstretch{1.6} 

\begin{abstract}
Sport leagues assign a fixed score-point to each match outcome and rank the players (or teams) by their accumulated scores. Since the score-points are mandated without a probabilistic model in mind, we investigate models that make the score the right summary of the data. We find the score to be a sufficient statistic if and only if the points rule is constant-sum, so that every match distributes a fixed number of points between its participants, and the outcomes follow the adjacent categories model whose slope parameters are the score-points themselves. The constant-sum condition fails in the National Hockey League and in football, which awards three points for a win instead of the two required by sufficiency. We also establish that when two players meet every other opponent equally often, and each plays as often at home as away, they are ordered identically by the score and by the estimated skills; a venue-balanced round-robin satisfies the condition for all pairs. The Elo ranking then produces the same order. In nine leagues from football, ice hockey and volleyball, the two orders differ only where the schedule condition fails, and the uniform rule awarding one point per outcome level agrees with the estimated skills more closely than each league's own rule, unless that rule is already the uniform one.
\end{abstract}

\begingroup\setstretch{1}%
\noindent\textbf{Keywords:} ranking; paired comparisons; ordinal models; sufficient statistic; home advantage; sports scheduling
\par\endgroup

\section{Introduction}\label{Sec:Intro}

Rankings in one-on-one competitions such as sport leagues or tournaments are almost universally produced by \emph{counting points}: each of the $L$ possible outcomes $y$ is assigned a fixed score-point $\xi_y$ and the players (or teams) are ordered by their accumulated \emph{scores} $s_i=\sum_y\xi_y k_{y;i}$ \citep{Langeville12_book}, where $k_{y;i}$ counts how often the player $i$ obtained the outcome $y$. The outcomes are ordinal, so the one indexed by the host as $y$ is indexed by the visitor as $L-1-y$, giving them, respectively, $\xi_y$ and $\xi_{L-1-y}$ score-points. The score-points are mandated, \eg $(0\text{-}1\text{-}3)$ in association football or $(0\text{-}1\text{-}2\text{-}2)$ in ice hockey, and no model relates them to the skills, so the score $s_i$ has no probabilistic interpretation, and yet the order it induces is routinely taken to define what it means to be ``the best''. In parallel, a large body of work fits \emph{probabilistic models} to the observed outcomes and ranks the players by an estimated skill $\theta_i$ \citep{Newman23}. Our objective is to make explicit the link between the two approaches.

We fix the ranking, as used in practice, and ask which model it corresponds to: in the statistical sense, we require the accumulated scores $s_i$ to be a sufficient statistic for the skills. We find that the score is sufficient if and only if the points rule is constant-sum, $\xi_y+\xi_{L-1-y}=\tnr{const}$, and the outcomes follow the \gls{ac} model whose slopes are the score-points themselves. Thus, the condition requires every match to distribute the same total. For binary matches this restates a classical fact, the win count being the sufficient statistic of the Bradley--Terry model \citep{Zermelo29,Bradley52,Ford57}, and the ordinal statement has a counterpart in the item-response literature \citep{Andersen77,Masters82}, where the home-away distinction does not appear and the score-points are, in effect, constrained to be uniform, a restriction we do not impose.

One consequence is a simple constant-sum test for a points rule to yield a sufficient score, which some familiar rules fail: the football $(0\text{-}1\text{-}3)$ rule distributes three points in a match with a winner and two in a draw; similarly, the \gls{nhl} rule distributes two points in regulation time and three in overtime. With three outcomes, the test is passed when the draw is worth half a win, a value otherwise adopted by convention or identified by simulation \citep{Lasek18}.

We also identify a condition under which ranking by the score reproduces the ranking by the estimated skills, namely that the two players meet every other opponent equally often and each plays as often at home as away. A venue-balanced round-robin satisfies it for all players, which generalizes the results from binary tournaments \citep{Zermelo29,Ford57} to ordinal outcomes and the venue. The order does not depend on the home advantage or on the prior, which explains why estimators as different as the Elo rating and the \gls{map} estimate induce the same ranking \citep{Szczecinski26b}.

Using data from association football, ice hockey and volleyball, we then measure how much the two orders differ where this condition fails. Only a few pairs are reordered even in the \gls{nhl}, whose schedule is the furthest from schedule-equivalent, which explains why point counting works well. Separately, we interpret each league's own points rule as an implicitly mandated model and compare it with the uniform rule, which awards one point per outcome level. The uniform rule needs no estimation and, in every league we examine, outperforms the league's own rule unless that rule is already the uniform one, so we recommend it.

Section~\ref{Sec:Model} defines the ranking by point counting and the ordinal models, and states the main result (Lemma~\ref{Lem:score.characterization}). Section~\ref{Sec:Estimation} derives the \gls{map} estimate and the conditions under which the score recovers the skill order, Sec.~\ref{Sec:Numerical} presents the numerical results, and Sec.~\ref{Sec:Conclusions} concludes. The proof of Lemma~\ref{Lem:score.characterization} is given in the Appendix.

\section{Models and ranking strategies}\label{Sec:Model}

We consider $M$ players (or teams) indexed by $i=1,\ld,M$ facing each other in one-on-one matches. The outcome of a match is $y\in\mcY=\set{0,1,\ld,L-1}$, ordered from the point of view of one of the two players, the outcome $y=l$ being more valuable for that player than $y=l-1$. For $L=3$, $y=0$ is their loss, $y=1$ the draw, and $y=2$ their win.

The elementary count is $k^\tnr{h}_{y;i,j}$: the number of matches \emph{hosted by} $i$ against $j$ that ended in $i$'s outcome $y$; for all pairs it lists every match once, each match having a single host. The same matches, seen from the visitor's side, are counted as
\begin{align}
\label{k.a.def}
    k^\tnr{a}_{y;i,j}:=k^\tnr{h}_{L-1-y;j,i},
\end{align}
\ie those where $i$ visited $j$, and outcome $y$ for $i$ is the outcome $L-1-y$ for $j$. Every match thus has a home entry for its host and an away entry for its visitor; $k^\tnr{a}_{y;i,j}$ is an alias that adds no information and only eases the notation. Throughout, the first index names the player from whose perspective the outcome and the skill difference are taken, the second the opponent, and the superscript records where that first player played; swapping the pair therefore replaces $y$ by $L-1-y$, reverses the sign of the skill difference and exchanges the two venues.

Summing over opponents gives player $i$'s home and away counts, as well as the count of outcomes $y$ obtained by $i$ at either venue
\begin{align}
\label{player.records}
    k^\tnr{h}_{y;i}&=\sum_j k^\tnr{h}_{y;i,j},\qquad
    k^\tnr{a}_{y;i}=\sum_j k^\tnr{a}_{y;i,j},\qquad
    k_{y;i}=k^\tnr{h}_{y;i}+k^\tnr{a}_{y;i}.
\end{align}

Summing over outcomes gives $k^\tnr{h}_{i,j}=\sum_y k^\tnr{h}_{y;i,j}$ which is the number of times $i$ hosted $j$, $k^\tnr{a}_{i,j}=\sum_y k^\tnr{a}_{y;i,j}=k^\tnr{h}_{j,i}$ the number of times $i$ visited $j$, $k_{i,j}=k^\tnr{h}_{i,j}+k^\tnr{a}_{i,j}$ the total number of $i$ vs $j$ matches, and $k^\tnr{h}_i=\sum_j k^\tnr{h}_{i,j}$, $k^\tnr{a}_i=\sum_j k^\tnr{a}_{i,j}$, $k_i=k^\tnr{h}_i+k^\tnr{a}_i$ the home, away, and total match counts of $i$.

We call the pair $(i,j)$ \emph{schedule-equivalent} if $i$ and $j$ meet every other opponent the same number of times, $k_{i,l}=k_{j,l}$ for $l\ne i,j$. 
Requiring it for every pair forces all the counts to be the same,\footnote{Applying it to the pair $(j,l)$ at the opponent $i$ gives $k_{i,j}=k_{i,l}$, so each player meets all opponents equally often, and $k_{i,j}=k_{j,i}$ makes the common value the same for all players.} which is a \emph{round-robin} schedule, in which every pair meets $k\ge1$ times.

We say that the schedule is \emph{venue-balanced for the player $i$} if $i$ hosts and visits every opponent equally often, $k^\tnr{h}_{i,l}=k^\tnr{a}_{i,l}$ for every $l$, and \emph{venue-balanced} if this holds for every player. Venue balance requires an even number of meetings per pair. 

Under the fundamental assumption that the players can be ordered, $\ld \succ i \succ j \succ \ld$, where $i\succ j$ means that $i$ is ``better''/``stronger'' than $j$, the \emph{ranking} is an estimate of this non-observable order. Ranking algorithms most often operate in two steps: a scalar value is assigned to each player, reflecting their ``strength'' or ``merit'' \citep{Langeville12_book,Newman23}, and the players are then sorted by that value. Some algorithms are derived from statistical models and require an optimization to be solved, while others, such as those based on point counting, come from practice.

\subsection{Ranking via point counting}\label{Sec:score.point.accumulation}

The ranking used in sport leagues assigns the \emph{score-point} $\xi_y$ to the result $y$ and calculates the total score\footnote{We use \emph{score} in its sporting sense throughout: the accumulated total $s_i$ by which players are ranked. It should not be confused with the statistical \emph{score}, the gradient of the log-likelihood (Fisher's score function); where the latter is meant we write \emph{gradient}.} for each player over all matches
\begin{align}
\label{score.definition}
    s_i=\sum_y \xi_{y} k_{y;i}.
\end{align}

The ranking is then determined by sorting the scores in descending order
\begin{align}
\label{score.implication}
    s_i> s_j \implies  i \succ j.
\end{align}

The tuple of score-points $\bxi=(\xi_0\text{-}\ld\text{-}\xi_{L-1})$ is called the \emph{points rule}. For example, in the ternary (win/draw/loss) matches, $L=3$, the rule \eqref{score.definition} is summarized by the triplet $(\xi_0\text{-}\xi_1\text{-}\xi_2)$, which means that we assign $\xi_2$ points to the win, $\xi_1$ points to the draw, and $\xi_0$ points to the loss.

An increasing affine transformation of the score-points, $\xi_y\leftarrow a\xi_y +b$ with $a>0$, does not change the order of $s_i$ when every player plays the same number of matches $k_i$.\footnote{The scaling gives $s_i\leftarrow a\,s_i$ and the shift adds $b\sum_y k_{y;i}=b k_i$, a player-independent constant.} We may thus fix two of the score-points, and we use whichever normalization is convenient, such as $(0\text{-}\tfrac{1}{2}\text{-}1)$ or $(0\text{-}1\text{-}2)$.

The ranking \eqref{score.implication} is so common that it is often equated with the very meaning of ``the best'', although the score-points are mandated, \eg by sports federations, and no model relates the skills to the accumulated score. Its pervasiveness indicates however that it conveys information that agrees with the common intuition about the skills. We therefore ask whether there is more to point counting than an arbitrary convention, and look for the model in which the score is the right summary of the data, that is, a \emph{sufficient statistic}. Section~\ref{Sec:AC.model} shows that this requirement identifies the form of the model and makes its parameters equal to the score-points, up to an affine transformation.

\subsection{Outcome models and the venue}\label{Sec:ranking=inference}

The probabilistic approach to ranking assumes that the skills of the players $\theta_i$ and $\theta_j$ determine the probability of the outcome $y$, most often through their difference $z_{i,j}=\theta_i-\theta_j$, and it is common to let the venue matter as well \citep{Kuk95,fifa_rating_W}:\footnote{The motivation to separate the venues appears for the extreme outcomes: for a given skill difference $z_{i,j}$, the win $y=L-1$ should be more probable when $i$ is the host than when $i$ is the visitor. Similarly, the host's loss should be less probable than the visitor's loss.}
\begin{align}
\label{P.h.def}
    \PR{Y=y|\theta_i,\theta_j, i~ \text{is the host}}&=\mfP^\tnr{h}_y(z_{i,j})\\
\label{P.a.def}
    \PR{Y=y|\theta_i,\theta_j, i~ \text{is the visitor}}&=\mfP^\tnr{a}_y(z_{i,j}).
\end{align}
In both, $Y$ is the outcome from $i$'s perspective and the superscript indicates the venue at which $i$ plays. In a match where $i$ hosts $j$, that outcome $y$ corresponds to the outcome $L-1-y$ for $j$. By swapping $i\leftrightarrow j$, the skill difference reverses sign, $z_{j,i} = -z_{i,j}$, and since we refer to the same event, the probabilities must coincide:
\begin{align}
\label{P.h.P.a.reciprocity}
    \mfP^\tnr{h}_y(z_{i,j})=\mfP^\tnr{a}_{L-1-y}(-z_{i,j}),
\end{align}
thus \eqref{P.a.def} follows from \eqref{P.h.def} and we only need one function $\mfP^\tnr{h}_y(z)$.

The log-likelihoods are $\ell^\tnr{h}_y(z)=\log\mfP^\tnr{h}_y(z)$ and $\ell^\tnr{a}_y(z)=\log\mfP^\tnr{a}_y(z)$, and they satisfy the same reciprocity. In the absence of the home advantage the two coincide and we remove the superscript, $\mfP^\tnr{h}_y(z)=\mfP^\tnr{a}_y(z)=\mfP_y(z)$, which means that
\begin{align}
\label{P.y.symmetry}
    \mfP_y(z)=\mfP_{L-1-y}(-z).
\end{align}

\subsection{Point counting identifies the \gls{ac} model}\label{Sec:AC.model}

We require the score to be a \emph{sufficient statistic} for the skills, that is, the data enter the estimating equation of each player $i$ only through that player's score $s_i$, for all skill values and schedules $k^\tnr{h}_{i,j}$. The outcome model may depend on the venue, with $\mfP^\tnr{h}_y(z)$ in \eqref{P.h.def} at home, the away model fixed by the reciprocity~\eqref{P.h.P.a.reciprocity}, and $\mfP^\tnr{h}_y(z)$ not constant in $z$. The points $\xi_y$ in the count~\eqref{score.definition} are non-decreasing and normalized to $\xi_0=0$, $\xi_{L-1}=1$. They are also rational, as any points rule in use is.

\begin{lemma}[Sufficiency of the point count]\label{Lem:score.characterization}
The point count~\eqref{score.definition} is a sufficient statistic for the skills if and only if
(i) the score-points are constant-sum, $\xi_y+\xi_{L-1-y}=1$, an outcome indexed $y$ by the host being indexed $L-1-y$ by the visitor; and
(ii) the outcome probabilities have the form
\begin{align}
\label{AC.model}
    \mfP^\tnr{h}_{y}(z)&= \frac{\e^{\alpha^\tnr{h}_y + \delta_y z}}{D^\tnr{h}(z)},
    \quad D^\tnr{h}(z)=\sum_{l=0}^{L-1} \e^{\alpha^\tnr{h}_l + \delta_l z},
    \quad y=0,\ld, L-1,
\end{align}
with the linear predictor $\alpha^\tnr{h}_y+\delta_y z$, whose slopes are the score-points, $\delta_y=\xi_y$, and whose intercepts $\balpha^\tnr{h}$ are unconstrained.
\end{lemma}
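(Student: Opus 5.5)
We prove the two directions separately. The sufficiency notion is the one fixed just before the statement: the data enter player $i$'s estimating equation only through $s_i$, for all skill values and all schedules $k^\tnr{h}_{i,j}$.

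\emph{``If''.} Assume (i) and (ii) and write the log-likelihood over all matches, each counted once by its host, as $\sum_{i,j}\sum_y k^\tnr{h}_{y;i,j}\ell^\tnr{h}_y(z_{i,j})$. Under \eqref{AC.model}, $\ell^\tnr{h}_y(z)=\alpha^\tnr{h}_y+\xi_y z-\log D^\tnr{h}(z)$. Differentiate with respect to $\theta_i$. The matches hosted by $i$ contribute $\sum_{j}\sum_y k^\tnr{h}_{y;i,j}\bigl(\xi_y-\Ex^\tnr{h}[\xi_Y|z_{i,j}]\bigr)$. A match in which $j$ hosts $i$ contributes $-\sum_y k^\tnr{h}_{y;j,i}\bigl(\xi_y-\Ex^\tnr{h}[\xi_Y|z_{j,i}]\bigr)$. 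Using the alias \eqref{k.a.def} and the constant-sum identity $-\xi_{L-1-y}=\xi_y-1$, this away term equals $\sum_y k^\tnr{a}_{y;i,j}\xi_y$ minus a function of $k^\tnr{a}_{i,j}$ and the skills only. Summing both venues, the data-dependent part of the gradient is exactly $\sum_y\xi_y k_{y;i}=s_i$. Everything else depends only on the schedule and the skills, so the estimating equation involves the data only through $s_i$. The same argument applies to the intercepts, which are not skills and are left free.

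\emph{``Only if''.} Write the gradient of the log-likelihood in $\theta_i$ as
\begin{align*}
\sum_j\sum_y\Bigl[k^\tnr{h}_{y;i,j}\,\dot\ell^\tnr{h}_y(z_{i,j})+k^\tnr{a}_{y;i,j}\,\dot\ell^\tnr{a}_y(z_{i,j})\Bigr],
\end{align*}
where $\dot\ell^\tnr{a}_y(z)=-\dot\ell^\tnr{h}_{L-1-y}(-z)$ by reciprocity. Take a schedule with a single match hosted by $i$ against $j$. Sufficiency requires that the gradient, as a function of the outcome $y$, be an affine function $a(z)+b(z)\xi_y$ of the score increment $\xi_y$. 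This gives $\dot\ell^\tnr{h}_y(z)=a(z)+b(z)\xi_y$ for every $z$. Because $\sum_y\mfP^\tnr{h}_y=1$, we have $\sum_y\mfP^\tnr{h}_y\dot\ell^\tnr{h}_y=0$, so $a=-b\,\Ex[\xi_Y]$. Then $\log\mfP^\tnr{h}_y(z)-\log\mfP^\tnr{h}_0(z)=\xi_y B(z)+c_y$ with $B'=b$.

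The main obstacle is forcing $B$ to be linear. With arbitrary schedules, an estimating equation of the form ``$s_i$ equals its expectation'' must be a single equation shared by all schedules. The function $b(z)$ multiplies the score contribution of each match, so mixing two matches at different skill differences $z_{i,j}\ne z_{i,l}$ (two opponents) yields $b(z_{i,j})\xi_{y}+b(z_{i,l})\xi_{y'}$. This combination depends on the data only through $\xi_y+\xi_{y'}$ only if $b(z_{i,j})=b(z_{i,l})$. Hence $b$ is constant and $B(z)=bz$, and $b\neq0$ because $\mfP^\tnr{h}$ is not constant. Rescaling $z$, or absorbing $b$ into the skills, gives $\delta_y=\xi_y$, which is (ii).

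For (i), take one match in which $i$ visits $j$. By the same argument, the away contribution is $b\,\xi_{L-1-y}$ with a negative sign, and it must also be affine in $\xi_y$. Combine this with a home match of $i$. The pooled score $s_i$ weights home and away outcomes identically, so $-\xi_{L-1-y}=\xi_y-c$ for all $y$. The normalization $\xi_0=0$, $\xi_{L-1}=1$ then gives $c=1$. Rationality of the $\xi_y$ is needed here: it guarantees that, when several matches are pooled, distinct outcome configurations with equal $s_i$ exist, so that the identification of the score increments is genuinely forced and not a coincidence of irrational values. I expect that making this pooling step rigorous, choosing schedules for which equal scores with unequal gradient contributions would be realizable, is the delicate part.
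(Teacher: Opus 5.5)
Your ``if'' direction is the paper's converse (the computation behind \eqref{MAP.venue}). Your later steps are sound \emph{once} the home gradient is known to be affine in $\xi_y$. Swapping the outcomes $0$ and $L-1$ between two opponents forces $b(z_{i,j})=b(z_{i,l})$. Swapping $(y,0)$ with $(0,y)$ between a home and an away match forces $\xi_y+\xi_{L-1-y}=1$. The paper instead obtains (i) from the reciprocity applied to the affine forms at both venues; your swap is an equally valid route.

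The gap is the step you treat as immediate. A single match hosted by $i$ does not force $\dot\ell^\tnr{h}_y(z)=a(z)+b(z)\xi_y$. With one match $s_i=\xi_y$, and sufficiency only says that $\dot\ell^\tnr{h}_y(z)$ depends on $y$ through $\xi_y$. That is no constraint at all when the $\xi_y$ are distinct. Irrational points show this step cannot be done so cheaply. Take $L=3$ and $\xi_1\in(0,1)$ irrational. Then the score $k_{1;i}\xi_1+k_{2;i}$ together with the schedule determines $k_{0;i},k_{1;i},k_{2;i}$. So the score is sufficient for the AC model with slopes $(0,\tfrac12,1)$, although $\bxi$ is neither constant-sum nor equal to those slopes. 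Your argument never genuinely uses rationality, so as written it would prove this false conclusion.

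The missing idea is the paper's pooling at a \emph{single} skill difference. Let $i$ host $j$ many times, and perturb the outcome counts of these matches by an integer vector $\bepsilon$ with $\sum_y\epsilon_y=0$ and $\sum_y\xi_y\epsilon_y=0$. This keeps the schedule and $s_i$ fixed, so sufficiency requires $\sum_y\epsilon_y\dot\ell^\tnr{h}_y(z)=0$. Rationality of $\bxi$ is what guarantees that such integer vectors span the whole $(L-2)$-dimensional real orthogonal complement of $\tnr{span}\set{[1,\ld,1],\bxi}$. A schedule with enough meetings keeps the perturbed counts non-negative. This forces $[\dot\ell^\tnr{h}_0(z),\ld,\dot\ell^\tnr{h}_{L-1}(z)]$ into that span, which is exactly the affine form.

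With this step supplied, the rest of your proof goes through. Your closing remark about rationality belongs here, not in (i). The two swaps you use, for the constancy of $b$ and for (i), only compare configurations with equal scores, and those exist for any real $\bxi$. Finally, absorbing $b$ into the skills needs $b>0$. Take that from the requirement that higher skills make higher outcomes more probable, not merely from $b\neq0$.
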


\begin{proof}
See Appendix~\ref{App:proof}.
\end{proof}

The form \eqref{AC.model} is the multinomial logistic \gls{ac} model \citep{Bock72,Tutz20,Egidi21,Szczecinski22},\footnote{The model owes its name to the adjacent-category log-odds, which are linear in $z$, $\log[\mfP^\tnr{h}_{y+1}(z)/\mfP^\tnr{h}_y(z)]=(\alpha^\tnr{h}_{y+1}-\alpha^\tnr{h}_y)+(\delta_{y+1}-\delta_y)z$.} with $D^\tnr{h}(z)$ the normalization factor (\ie a partition function \citep[Sec.~4.2]{Barber12_Book}) and the log-likelihood
\begin{align}
\label{AC.model.ell}
\ell^\tnr{h}_{y}(z)&= \alpha^\tnr{h}_y + \delta_y z -\log D^\tnr{h}(z).
\end{align}
Strictly increasing slopes, $\delta_y<\delta_{y+1}$, $y=0,\ld,L-2$, make the model ordinal \citep{Tutz19}; where a rule gives two outcomes the same score-point, their probabilities keep a fixed ratio and the skill does not separate them. The constant-sum condition then fixes the slope vector as $\bdelta=[0,\delta_1,\ld,1-\delta_1,1]$, with $\lfloor(L-2)/2\rfloor$ free entries: none for $L=3$, where $\delta_1=\tfrac12$ is forced, one for $L=4$, and two for $L=6$. We fix $\alpha^\tnr{h}_0=0$ without loss of generality.\footnote{The model is unchanged under $\alpha^\tnr{h}_l\leftarrow\alpha^\tnr{h}_l-\alpha_\tnr{ref}$, and we use $\alpha_\tnr{ref}=\alpha^\tnr{h}_0$.} 

Differentiating \eqref{AC.model.ell} we obtain
\begin{align}
\label{dot.ell_y(z).AC}
    \dot\ell^\tnr{h}_y(z)&=\delta_y - G^\tnr{h}(z),
\end{align}
where, for a given $z$
\begin{align}
    \label{G(z).AC}
    G^\tnr{h}(z) & =\sum_{l=0}^{L-1}\delta_l\mfP^\tnr{h}_l (z)=\Ex[\delta_Y\mid z]
\end{align}
calculates the expected score from the host's perspective. Differentiating \eqref{AC.model} yields $\dot{\mfP}^\tnr{h}_l(z)=\mfP^\tnr{h}_l(z)\bigl[\delta_l-G^\tnr{h}(z)\bigr]$. Therefore $\dot G^\tnr{h}(z)=\sum_l\delta_l\mfP^\tnr{h}_l(z)\bigl[\delta_l-G^\tnr{h}(z)\bigr]=\Var[\delta_Y\mid z]$, which is positive because the slopes are not all equal, so $G^\tnr{h}(\cd)$ is strictly increasing. The visitor's counterparts are $\ell^\tnr{a}_y(z)$ and $G^\tnr{a}(z)=\sum_{l=0}^{L-1}\delta_l\mfP^\tnr{a}_l(z)$. The reciprocity \eqref{P.h.P.a.reciprocity} gives $\dot\ell^\tnr{a}_y(z)=-\dot\ell^\tnr{h}_{L-1-y}(-z)$ and, with the constant-sum slopes, $G^\tnr{h}(z)=1-G^\tnr{a}(-z)$, so $G^\tnr{a}(\cd)$ increases as well. The visitor's gradient is then of the same form as \eqref{dot.ell_y(z).AC},
\begin{align}
    \label{dot.ell_y(z).AC.a}
    \dot\ell^\tnr{a}_y(z)&=\delta_y - G^\tnr{a}(z).
\end{align}
At both venues the gradient is the outcome's score-point less the expected score.

\paragraph{The home advantage.}
The model \eqref{AC.model} is the same at both venues and only the intercepts differ, $\balpha^\tnr{h}=[0,\alpha^\tnr{h}_1,\ld,\alpha^\tnr{h}_{L-1}]$ for the home and $\balpha^\tnr{a}$ for the away venue. Namely, from $\mfP^\tnr{h}_y(z)=\mfP^\tnr{a}_{L-1-y}(-z)$ in \eqref{P.h.P.a.reciprocity}, we have
\begin{align}
   \mfP^\tnr{a}_y(z) \propto \e^{\alpha^\tnr{h}_{L-1-y}-\delta_{L-1-y} z}\propto \e^{\alpha^\tnr{a}_{y}+\delta_{y} z},\nonumber
\end{align}
where the second step uses the constant-sum $\delta_{L-1-y}=1-\delta_y$ and absorbs the resulting factor $\e^{-z}$, which does not depend on $y$, into the normalization. Thus, only the intercepts $\alpha^\tnr{a}_y=\alpha^\tnr{h}_{L-1-y}$ differentiate between venues.

The venue-neutral model \eqref{P.y.symmetry} imposes a symmetry $\alpha_y = \alpha_{L-1-y}$ \citep[Sec.~3]{Agresti92} and then we drop the superscript and write $\mfP_y(z)$, $\ell_y(z)$, $D(z)$ and $G(z)$. The home advantage is then obtained by boosting the home skill by $\eta$ \citep{Rao67,Davidson77,fifa_rating_W,Lasek20,Szczecinski22a}
\begin{align}
\label{P.h.a.from.P}
    \mfP^\tnr{h}_y(z)=\mfP_y(z+\eta),\qquad \mfP^\tnr{a}_y(z)=\mfP_y(z-\eta),
\end{align}
which satisfies the reciprocity \eqref{P.h.P.a.reciprocity} and, in the \gls{ac} model, is equivalent to shifting the symmetric intercepts $\alpha_y=\alpha_{L-1-y}$,
\begin{align}
\label{alpha.sym->alpha}
	\alpha^\tnr{h}_y &= \alpha_y + \delta_y\,\eta.
\end{align}
This home-boost model has $L-1$ free parameters. At $L=2$ and $L=3$ the boost is an exact reparameterization of the free $\balpha^\tnr{h}$; the constraints appear for $L\ge4$, and we keep the boost throughout. Setting $\eta=0$ recovers the venue-neutral model \eqref{P.y.symmetry}. These smallest cases are familiar paired-comparison models: at $L=2$ the Bradley--Terry model \citep{Bradley52}, and at $L=3$ Davidson's ties model \citep{Davidson70}.

\paragraph{Points rules in practice.}
The count can be sufficient only if the rule is constant-sum, so that every match distributes the same total between its participants. Two familiar rules do not satisfy this: the football $(0\text{-}1\text{-}3)$ rule distributes $3$ points in a match with a winner but $2$ in a draw, and the \gls{nhl} $(0\text{-}1\text{-}2\text{-}2)$ rule distributes $2$ points in regulation time but $3$ in overtime. With $\xi_0=0$, for three outcomes the condition reads $0+\xi_2=2\xi_1$, so the draw is worth half a win and the rule is $(0\text{-}1\text{-}2)$; the half-point follows from the model rather than being imposed, and it is the value that \citet[Sec.~5.5]{Lasek18} identify only through simulations. For four outcomes it reads $0+\xi_3=\xi_1+\xi_2$, which leaves one interior value free, $\xi_2=\xi_3-\xi_1$, to be fit or posited. For example, the $(0\text{-}1\text{-}2\text{-}3)$ rule of the \gls{shl}, also postulated by \citet{LeBrun09} and \citet{Luszczyszyn17}, satisfies it.

\subsection{Relation to other models}\label{Sec:related.models}
Sufficiency is a property of the \gls{ac} model, so two sets of results that produce the same scores on the same schedule yield the same estimated skills. This means that the pattern of the outcomes across the opponents, which may matter in other models \citep{Remage66,deCani69,Tiwisina19}, does not matter here. Different models fitted to the same results also produce different rankings \citep{Ley19}.

The sufficiency of an accumulated score is a known theme in the \gls{irt} literature \citep{Andersen77,Masters82}, and Lemma~\ref{Lem:score.characterization} parallels \citet[Proposition 1]{Szczecinski26b}, where imposing the Elo (logistic) structure fixes the \gls{ac} form and its parameters. What is particular to the ranking problem is the home/away distinction.

Lemma~\ref{Lem:score.characterization} excludes the widely used \gls{cl} (proportional-odds) model \citep{Agresti92,Tutz20}, $\mfP_y(z)=\sigma(z-c_{y-1})-\sigma(z-c_y)$ with the logistic function $\sigma(\cd)$ and increasing thresholds $c_y$. There, the gradient is $\dot\ell_y(z)=-1+\sigma(c_{y-1}-z)+\sigma(c_y-z)$, in which the outcome enters through the thresholds, inside terms that also contain $z$. It therefore does not separate from the skill difference, and all matches must enter the estimation \citep[Sec.~8.2.2]{Agresti13_book}. The point count is thus not sufficient for the \gls{cl} model, except at $L=2$, where both reduce to Bradley--Terry \citep{Bradley52}.

For us the identification matters more than the sufficiency itself. The slopes of the \gls{ac} model, and hence the score-points, can be \emph{estimated} from the data, so the model uncovers the points rule that the outcomes imply.\footnote{The alternatives are to posit ground-truth skills and simulate \citep{Lasek18}, which still requires a model to be estimated, or to build a model on the order alone \citep{Szczecinski23b}, which entails complex combinatorial optimization.}


\section{Estimating the skills}\label{Sec:Estimation}

We now derive the estimate and give a condition under which the score order and the skill order agree.

\subsection{The estimation problem}\label{Sec:MAP}

We work with the \gls{ac} model of Sec.~\ref{Sec:AC.model} and the \gls{map} criterion \citep{Lasek18,Newman23}
\begin{align}
\label{MAP.objective}
    \hat\btheta = \argmax_{\btheta}
    \sum_{i,j}\sum_y
    k^\tnr{h}_{y;i,j}\, \ell^\tnr{h}_y(\theta_i-\theta_j)
     + \sum_i\log f(\theta_i),
\end{align}
where each match is counted once, from the host's perspective: $k^\tnr{h}_{y;i,j}$ is the number of matches in which $i$ hosts $j$ with $i$'s outcome $y$, associated with the home log-likelihood $\ell^\tnr{h}_y(z)=\alpha^\tnr{h}_y+\delta_y z-\log D^\tnr{h}(z)$ from \eqref{AC.model.ell}.

The term $\log f(\theta_i)$ in \eqref{MAP.objective} is the log-prior, which acts as the regularizer. Provided it decreases to $-\infty$ in both directions, it prevents the divergence of $\hat\theta_i$ that would otherwise occur when all outcomes of player $i$ are wins ($y=L-1$) or all are losses ($y=0$). Thus, the prior ensures that $\hat\btheta$ exists, which is not guaranteed by the \gls{ml} principle of \citet{Zermelo29}. 
We use the Gaussian prior of precision $\gamma$, which shrinks the estimates towards zero and, being strictly concave, makes $\hat\btheta$ unique, 
\begin{align}
\label{prior.ridge}
    \log  f(\theta_i) = -\tfrac{\gamma}{2}\,\theta_i^2 + \tnr{const}.
\end{align}

We set the derivative of \eqref{MAP.objective} with respect to $\theta_i$ to zero. The matches hosted by $i$ contribute $\dot\ell^\tnr{h}_y(\theta_i-\theta_j)$ and those $i$ visited contribute $-\dot\ell^\tnr{h}_{L-1-y}(\theta_j-\theta_i)=\dot\ell^\tnr{a}_y(\theta_i-\theta_j)$, both indexed by $i$'s own outcome $y$, see \eqref{k.a.def}. With the gradients \eqref{dot.ell_y(z).AC} and \eqref{dot.ell_y(z).AC.a} and the prior \eqref{prior.ridge}, this gives
\begin{align}
\label{MAP.venue}
    \sum_{j\neq i}\bigl[
    k^\tnr{h}_{i,j}\,G^\tnr{h}(\hat\theta_i-\hat\theta_j)
    + k^\tnr{a}_{i,j}\,G^\tnr{a}(\hat\theta_i-\hat\theta_j)\bigr]
    + \gamma\,\hat\theta_i
    = s_i,
\end{align}
where the data appear on the right alone, as the score \eqref{score.definition}, $s_i=\sum_y\delta_y\,k_{y;i}$ (the score-points being the slopes, $\xi_y=\delta_y$, by Lemma~\ref{Lem:score.characterization}), which does not distinguish the venues; the distinction appears only through $G^\tnr{h}(\cd)$ and $G^\tnr{a}(\cd)$.\footnote{Both summations are immediate, $\delta_y$ depending on neither the venue nor the opponent, and $G^\tnr{h}(\cd)$, $G^\tnr{a}(\cd)$ not depending on the outcome.}

Defining the average expected score and half the home-away gap as
\begin{align}
\label{G.split}
    \tilde G(\cd)=\tfrac12\bigl(G^\tnr{h}(\cd)+G^\tnr{a}(\cd)\bigr),\qquad \check G(\cd)=\tfrac12\bigl(G^\tnr{h}(\cd)-G^\tnr{a}(\cd)\bigr),
\end{align}
so that $G^\tnr{h}(\cd)=\tilde G(\cd)+\check G(\cd)$ and $G^\tnr{a}(\cd)=\tilde G(\cd)-\check G(\cd)$, \eqref{MAP.venue} may be written as 
\begin{align}
\label{MAP.split}
    \sum_{j\neq i}\bigl[
    k_{i,j}\,\tilde G(\hat\theta_i-\hat\theta_j)
    + d_{i,j}\,\check G(\hat\theta_i-\hat\theta_j)\bigr]
    + \gamma\,\hat\theta_i
    = s_i,
\end{align}
where $k_{i,j}=k^\tnr{h}_{i,j}+k^\tnr{a}_{i,j}$ is the number of matches between $i$ and $j$, and $d_{i,j}=k^\tnr{h}_{i,j}-k^\tnr{a}_{i,j}=-d_{j,i}$ is the venue imbalance of $i$ against $j$.

The outcomes enter \eqref{MAP.split} only through the score $s_i$, which is the converse direction of Lemma~\ref{Lem:score.characterization} and extends \citet{Zermelo29} and \citet[Sec.~8.1.4]{Agresti13_book}, both venue-neutral, to the home advantage. The individual fixtures are not needed: the schedule enters only through $k_{i,j}$ and $d_{i,j}$.

The term $d_{i,j}\check G(\cd)$ expresses the effect of the venue and disappears if (i) $i$ and $j$ meet equally often at each other's venues, so that $d_{i,j}=0$, or (ii) the model is venue-neutral \eqref{P.y.symmetry}, in which case $G^\tnr{h}(\cd)=G^\tnr{a}(\cd)$ and $\check G(\cd)\equiv0$.

\subsection{When the score recovers the skill order}\label{Sec:score.order}

Sufficiency guarantees that the scores $s_i$ contain all the information about the skills $\btheta$, but it does not tell us whether ranking by the scores reproduces the ranking by the \gls{map} skills $\hat\btheta$. Thus, following  \citet{Zermelo29}, we ask when the score order of the players in the pair $(i,j)$ reproduces their skill order, meaning the order of the estimates $\hat\theta_i$. We use the schedule vocabulary of Sec.~\ref{Sec:Model}: the pair $(i,j)$ is schedule-equivalent if $k_{i,l}=k_{j,l}$ for $l\ne i,j$, and the schedule is venue-balanced for the player $i$ if $i$ hosts and visits every opponent equally often, \ie $d_{i,l}=0$ for every $l$; these two conditions are independent.

\begin{proposition}[Ranking by score on a schedule-equivalent pair]\label{Prop:score.ranking.balanced}
If the pair $(i,j)$ is schedule-equivalent and the schedule is venue-balanced for $i$ and $j$, then their score and skill orders agree:
\begin{align}
\label{order.condition}
    s_i\ge s_j\iff\hat\theta_i\ge\hat\theta_j.
\end{align}
Under a venue-neutral (no home advantage) model, schedule equivalence alone guarantees \eqref{order.condition}.
\end{proposition}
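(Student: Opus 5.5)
Subtract the stationarity conditions \eqref{MAP.split} for $i$ and $j$ and show that the difference of the left-hand sides is a strictly increasing function of $\Delta:=\hat\theta_i-\hat\theta_j$ that vanishes at $\Delta=0$. Then $s_i-s_j$ has the same sign as $\Delta$, which is \eqref{order.condition}.

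Venue balance for $i$ and $j$ gives $d_{i,l}=d_{j,l}=0$ for every $l$, including $d_{i,j}=0$, so the $\check G$ terms drop out of both equations. Under a venue-neutral model they vanish anyway since $\check G\equiv0$, so the second claim follows from the same argument with venue balance dropped.

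Write the equation for $i$, isolating the term with opponent $j$ from those with $l\ne i,j$; do the same for $j$. Using $k_{i,l}=k_{j,l}$ from schedule equivalence and $k_{i,j}=k_{j,i}$, the difference reads
\begin{align*}
s_i-s_j=\sum_{l\ne i,j}k_{i,l}\bigl[\tilde G(\hat\theta_i-\hat\theta_l)-\tilde G(\hat\theta_j-\hat\theta_l)\bigr]
+k_{i,j}\bigl[\tilde G(\Delta)-\tilde G(-\Delta)\bigr]+\gamma\Delta .
\end{align*}
Since $G^\tnr{h}$ and $G^\tnr{a}$ are strictly increasing (shown after \eqref{dot.ell_y(z).AC}), so is $\tilde G$. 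Consequently:
\begin{itemize}
\item each bracket in the sum over $l$ has the sign of $\Delta$ and vanishes at $\Delta=0$;
\item the pair bracket $\tilde G(\Delta)-\tilde G(-\Delta)$ also has the sign of $\Delta$;
\item $\gamma\Delta$ has the sign of $\Delta$ and is nonzero unless $\Delta=0$, because $\gamma>0$.
\end{itemize}
Hence $\operatorname{sign}(s_i-s_j)=\operatorname{sign}(\Delta)$, which gives both directions of the equivalence, including the equality case.

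The main obstacle is only bookkeeping: the cross term against $j$ appears as $\tilde G(\Delta)$ in $i$'s equation and $\tilde G(-\Delta)$ in $j$'s. In the venue-balanced case this cross term still carries no $\check G$, because $d_{i,j}=0$. Note also that the other estimates $\hat\theta_l$ enter identically in both equations, so they need no control beyond monotonicity of $\tilde G$. For the venue-neutral claim, $d_{i,l}$ may be nonzero, but the $\check G$ terms are identically zero; the cross term $k_{i,j}[G(\Delta)-G(-\Delta)]$ is again monotone, and the argument is unchanged.
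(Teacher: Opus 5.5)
Your proposal is correct and follows the paper's proof step for step. You subtract the split stationarity conditions \eqref{MAP.split} for $i$ and $j$, remove the $\check G$ terms by venue balance (or by $\check G\equiv0$ in the venue-neutral case), and then use schedule equivalence, the strict monotonicity of $\tilde G$ and $\gamma>0$ to give every term the sign of $\hat\theta_i-\hat\theta_j$. Your opening description of the difference as an increasing function of $\Delta$ alone is slightly loose, since the opponent brackets also depend on $\hat\theta_l$, but the sign argument you actually carry out is exactly the paper's.
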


\begin{proof}
We write $\hat z_{i,j}=\hat\theta_i - \hat\theta_j$ and subtract \eqref{MAP.split} for $j$ from that for $i$:
\begin{align}
\label{s.i.j.diff}
    s_i-s_j=\gamma\hat z_{i,j}
    &+\sum_l\bigl[k_{i,l}\tilde G(\hat z_{i,l})-k_{j,l}\tilde G(\hat z_{j,l})\bigr]
    +\sum_l\bigl[d_{i,l}\check G(\hat z_{i,l})-d_{j,l}\check G(\hat z_{j,l})\bigr].
\end{align}
The venue balance, $d_{i,l}=d_{j,l}=0$ for every $l$, and the venue-neutral model, $\check G(\cd)\equiv0$, each remove the second sum in \eqref{s.i.j.diff}.

The schedule equivalence, $k_{i,l}=k_{j,l}$ for $l\ne i,j$, yields
\begin{align*}
    s_i-s_j=\gamma\hat z_{i,j}+k_{i,j}\bigl[\tilde G(\hat z_{i,j})-\tilde G(\hat z_{j,i})\bigr]
    +\sum_{l\ne i,j}k_{i,l}\bigl[\tilde G(\hat z_{i,l})-\tilde G(\hat z_{j,l})\bigr].
\end{align*}
Because $\tilde G(\cd)$ is strictly increasing, each bracket has the sign of the difference of its arguments: $\hat z_{i,l}-\hat z_{j,l}=\hat z_{i,j}$ (for the opponent terms, where the $\hat\theta_l$ cancels) and $\hat z_{i,j}-\hat z_{j,i}=2\hat z_{i,j}$ (for the head-to-head). With $\gamma>0$ and nonnegative counts, every term has the sign of $\hat{z}_{i,j}$, which proves \eqref{order.condition}. 
\end{proof}

The argument needs the venue balance itself: equal imbalances against the other opponents, $d_{i,l}=d_{j,l}$ for $l\ne i,j$, are not enough, because, unlike $\tilde G(\cd)$, the half-gap $\check G(\cd)$ is not monotone,\footnote{Since $G^\tnr{h}(z)=1-G^\tnr{a}(-z)$, the half-gap is even, $\check G(-z)=\check G(z)$.} so equal but nonzero imbalances leave a residual $\sum_{l\ne i,j} d_{i,l}[\check G(\hat z_{i,l})-\check G(\hat z_{j,l})]$ that need not have the sign of $\hat z_{i,j}$.

\begin{corollary}[Round-robin]\label{Cor:round.robin}
On a round-robin every pair is schedule-equivalent, so under a venue-neutral model Proposition~\ref{Prop:score.ranking.balanced} applies to all pairs at once and \eqref{order.condition} holds for every $i,j=1,\ld,M$. With a home advantage the schedule must be venue-balanced as well, which requires the meetings split evenly, $k^\tnr{h}_{i,l}=k^\tnr{a}_{i,l}=k/2$ ($k$ must be even); the smallest such venue-balanced round-robin is the double round-robin. In both cases point counting and the exact \gls{map} estimate induce identical rankings, ties included.
\end{corollary}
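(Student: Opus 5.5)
The plan is to reduce Corollary~\ref{Cor:round.robin} to Proposition~\ref{Prop:score.ranking.balanced}, applied pair by pair; the only real work is checking its hypotheses on a round-robin. First I will show that every pair is schedule-equivalent. On a round-robin each pair meets $k\ge1$ times, so $k_{i,l}=k=k_{j,l}$ for every $l\ne i,j$.

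In the venue-neutral case, the last sentence of Proposition~\ref{Prop:score.ranking.balanced} then gives \eqref{order.condition} for each pair $(i,j)$ separately. The MAP estimate $\hat\btheta$ is unique because the Gaussian prior \eqref{prior.ridge} is strictly concave. Hence the per-pair statements all refer to the same $\hat\btheta$, and together they give a single global statement for all $i,j=1,\ld,M$.

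With a home advantage I will also need venue balance for every player, $d_{i,l}=0$, that is, $k^\tnr{h}_{i,l}=k^\tnr{a}_{i,l}$. Since $k^\tnr{h}_{i,l}+k^\tnr{a}_{i,l}=k$, this forces $k^\tnr{h}_{i,l}=k/2$, so $k$ must be even. The smallest admissible value is $k=2$, in which each pair meets once at each venue: the double round-robin. Given venue balance and schedule equivalence, Proposition~\ref{Prop:score.ranking.balanced} applies to every pair.

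To get ``identical rankings, ties included'', I will use that \eqref{order.condition} holds in both directions $(i,j)$ and $(j,i)$. This yields $s_i=s_j\iff\hat\theta_i=\hat\theta_j$ as well as the strict orders. Sorting by $s$ and sorting by $\hat\theta$ therefore produce the same weak order. No step is a genuine obstacle. The only points needing care are the uniqueness of $\hat\btheta$, which justifies combining the pairwise conclusions, and the observation that ties transfer because the equivalence is two-sided.
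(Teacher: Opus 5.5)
Your proposal is correct and takes essentially the paper's route: a round-robin makes every pair schedule-equivalent, and with a home advantage venue balance forces $k^\tnr{h}_{i,l}=k^\tnr{a}_{i,l}=k/2$; Proposition~\ref{Prop:score.ranking.balanced} then applies to every pair, and its two-sided equivalence carries the ties. Your appeal to the uniqueness of $\hat\btheta$ is harmless but not needed, since the proposition's argument holds for any solution of \eqref{MAP.split}.
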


For binary win/loss outcomes, and ignoring the venue, this is the regular-tournament theorem of \citet{Zermelo29} and \citet{Ford57}, restated here for ordinal outcomes with the home/away venue and the regularized estimate.\footnote{\citet{Zermelo29} adopted $\delta_1=\tfrac12$ for ternary ($L=3$) matches as a convention, implemented by the fiction that twice as many games were played, a win counting double and a draw single; he notes that this reproduces the conventional half-point. The fiction is needed because his model is binary, whereas we show $\delta_1=\tfrac12$ to be a consequence of sufficiency.}

\begin{corollary}[Order invariance]\label{Cor:order.invariance}
For every pair $(i,j)$ to which Proposition~\ref{Prop:score.ranking.balanced} applies, the skill order of $i$ and $j$, which is their score order, does not depend on
(i) the choice of a strictly increasing $\tilde G(\cd)$, and hence on the intercepts $\balpha$ and, under the home-boost model, on the home advantage;
(ii) the replacement of the ridge~\eqref{prior.ridge} by any log-concave prior $\prod_i f(\theta_i)$; or
(iii) the presence of the prior, the \gls{ml} estimate ($\gamma=0$) giving the same order whenever it is finite.
\end{corollary}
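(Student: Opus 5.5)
The plan is to rerun the proof of Proposition~\ref{Prop:score.ranking.balanced} while keeping track of exactly which properties it uses, and then to check that each modification in (i)--(iii) preserves those properties. Under venue balance (or a venue-neutral model) and schedule equivalence, subtracting the stationarity conditions for $j$ and $i$ gave
\begin{align*}
    s_i-s_j=\gamma\hat z_{i,j}+k_{i,j}\bigl[\tilde G(\hat z_{i,j})-\tilde G(-\hat z_{i,j})\bigr]
    +\sum_{l\ne i,j}k_{i,l}\bigl[\tilde G(\hat z_{i,l})-\tilde G(\hat z_{j,l})\bigr].
\end{align*}
The sign argument only needs three facts: $\tilde G(\cd)$ is strictly increasing, the counts are nonnegative, and the prior term has the sign of $\hat z_{i,j}$. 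We never use the specific form of $\tilde G$ or the value of $\gamma>0$. The common structure is therefore this: the right-hand side is a sum of terms each having the sign of $\hat z_{i,j}$, and at least one of them is strictly sign-preserving. Hence $\operatorname{sign}(s_i-s_j)=\operatorname{sign}(\hat\theta_i-\hat\theta_j)$ whatever the remaining ingredients are. Because the score order is fixed by the data alone, the skill order of the pair then cannot depend on those ingredients.

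For (i), I note that $\tilde G$ enters only through monotonicity. In the \gls{ac} model, $G^\tnr{h}$ and $G^\tnr{a}$ are strictly increasing for any intercepts, since $\dot G^\tnr{h}=\Var[\delta_Y\mid z]>0$ and $G^\tnr{a}(z)=1-G^\tnr{h}(-z)$. So $\tilde G$ is strictly increasing for any $\balpha^\tnr{h}$. This holds in particular under the home-boost model \eqref{alpha.sym->alpha} for every $\eta$, because the boost is just a choice of intercepts. The venue term vanishes by venue balance regardless of $\check G$.

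For (ii), I replace $\gamma\hat\theta_i$ by $-\psi(\hat\theta_i)$, where $\psi=(\log f)'$ is non-increasing because $f$ is log-concave. The prior contribution to $s_i-s_j$ becomes $\psi(\hat\theta_j)-\psi(\hat\theta_i)$, which is $\ge0$ when $\hat z_{i,j}>0$ and $\le0$ when $\hat z_{i,j}<0$. This is weak sign agreement only. Strictness then has to come from the $\tilde G$ terms, and this is the step I expect to be delicate. If $k_{i,j}>0$, or if some common opponent has $k_{i,l}>0$, the strict monotonicity of $\tilde G$ gives strict sign. Otherwise the pair is disconnected from the rest of the data apart from the prior. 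In that case I would fall back on strict log-concavity, or argue the degenerate case separately; I would also restrict to priors for which the maximizer exists and satisfies the first-order condition, with a subgradient version if $f$ is not differentiable.

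For (iii), I set $\gamma=0$. Whenever the \gls{ml} estimate is finite it satisfies the same stationarity equations without the prior term, so the displayed identity holds with the $\gamma$ term removed. The remaining terms carry the sign of $\hat z_{i,j}$, strictly so as long as $i$ and $j$ have at least one match with each other or with a common opponent, which is the only case where the \gls{ml} comparison is meaningful. The equivalence \eqref{order.condition} then follows, so the order coincides with the score order, and hence with the \gls{map} order.
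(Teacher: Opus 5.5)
Your proposal is correct and follows the paper's own proof. Both rerun the sign analysis of \eqref{s.i.j.diff}, observe that it uses only the strict monotonicity of $\tilde G(\cd)$, the nonnegative counts and a prior term with the sign of $\hat z_{i,j}$, and then check that the intercepts, the home boost, a log-concave prior and $\gamma=0$ each preserve these properties. Your attention to strictness when $(\log f)'$ is merely non-increasing is a refinement the paper passes over; under schedule equivalence it matters only when $i$ and $j$ have played no matches at all, so that $s_i=s_j=0$ and both estimates are modes of $f$, which is the degenerate case you set aside.
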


\begin{proof}
All three parts follow from the proof of Proposition~\ref{Prop:score.ranking.balanced}, in which the sign of $s_i-s_j$ in \eqref{s.i.j.diff} rests only on $\tilde G(\cd)$ being strictly increasing and on the log-prior term having the sign of $\hat z_{i,j}$. The particular $\tilde G(\cd)$ is never used, and $\balpha$ and the home advantage enter only through it while preserving its monotonicity, which gives~(i). A log-concave prior replaces $\gamma\hat z_{i,j}$ by $(\log f)'(\hat\theta_j)-(\log f)'(\hat\theta_i)$, which has the sign of $\hat z_{i,j}$ because $(\log f)'(\cd)$ is non-increasing, every other term being unchanged, which gives~(ii). At $\gamma=0$ the prior term disappears and the remaining terms still have the sign of $\hat z_{i,j}$, which gives~(iii); regularization only guarantees that $\hat\btheta$ exists, whence the restriction to schedules on which the \gls{ml} estimate is finite.
\end{proof}

The Elo rating is one case of practical interest. Its update targets the optimality condition \eqref{MAP.split} with its own increasing $\tilde G(\cd)$, the logistic function, and without a prior \citep{Szczecinski26b}, so wherever Proposition~\ref{Prop:score.ranking.balanced} applies it induces the same order as the \gls{map} estimate and as point counting. Where it does not apply, the orders differ. This underlies the criticism of the UEFA club coefficients, which award the same points for every win regardless of the opponent, and the proposal to replace them by an Elo-based formula \citep{Csato24,Lapre25}.

\section{Numerical examples}\label{Sec:Numerical}

Real leagues are not always round-robins, so Corollary~\ref{Cor:round.robin} need not apply. However, Proposition~\ref{Prop:score.ranking.balanced} applies for every pair that is schedule-equivalent and for which the schedule is venue-balanced. Only the remaining pairs can be reordered, and we measure how much that matters in practice.

\subsection{Data and estimation}\label{Sec:num.estimation}

\paragraph{Data.}
We consider (i) association football, from Division~1 (England, top tier before 1992/93), the \gls{epl} (England, top tier since 1992/93), the Championship (England, second tier), and the Bundesliga (Germany, top tier); (ii) ice hockey, from the \gls{nhl} (North America) and the \gls{shl} (Sweden); and (iii) volleyball, from SuperLega (Italy, top tier). In football, $L=3$ and the outcomes are loss ($y=0$), draw ($y=1$), and win ($y=2$); Division~1 and the Bundesliga cover the two-point and the three-point eras, whose points rules we compare in Sec.~\ref{Sec:num.points}. In ice hockey, $L=4$ and we distinguish regulation loss ($y=0$), overtime loss ($y=1$), overtime win ($y=2$), and regulation win ($y=3$).\footnote{The four outcomes admit no draw, which decides the seasons we exclude. Draws occur in the \gls{nhl} before 2005/06, and in the \gls{shl} before 1999/00 and in 2004/05--2009/10.} In SuperLega, $L=6$ and $y=0,\ld,5$ are the set scores $(0\tnr{-}3,\ld,3\tnr{-}0)$.
\paragraph{Estimation.}
We use the \gls{ac} model \eqref{AC.model} with constant-sum slopes,
$\delta_{L-1-y}+\delta_y=1$, which is the result of Lemma~\ref{Lem:score.characterization}. This leaves $\lfloor (L-2)/2\rfloor$ slopes to estimate: none in football, where $\delta_1=\tfrac12$ is determined, $\delta_1$ in ice hockey, and $\delta_1$ and $\delta_2$ in SuperLega.

We estimate the model parameters and the prior precision $\gamma$ jointly over the retained seasons by maximizing the marginal likelihood, in which the skills $\btheta$, assumed independent between seasons, are integrated out. We use the \gls{em} algorithm \citep[Ch.~11.2]{Barber12_Book}, treating $\btheta$ as missing data with posterior approximated as Gaussian $\mcN(\hat\btheta,\bH^{-1})$, where $\hat\btheta$ is the \gls{map} estimate \eqref{MAP.objective}.\footnote{$\bH$ is minus the Hessian of the objective in \eqref{MAP.objective} at $\hat\btheta$. With the Gaussian approximation, the expected log-likelihood depends on the remaining parameters in closed form. Since the likelihood depends on the skills only through the differences $z_{i,j}$, only their univariate marginals are needed, $z_{i,j}\sim\mcN(\hat z_{i,j},v_{i,j})$ with $v_{i,j}=[\bH^{-1}]_{i,i}+[\bH^{-1}]_{j,j}-2[\bH^{-1}]_{i,j}$, so the expectation is a one-dimensional integral per match, which we evaluate by Gauss-Hermite quadrature. The M-step then maximizes it over $\balpha^\tnr{h}$, $\bdelta$ and $\eta$, while the precision has the closed form $\hat\gamma^{-1}=\operatorname{mean}_i(\hat\theta_i^2+[\bH^{-1}]_{i,i})$, the second moment of the same Gaussian. The two steps are iterated until convergence.} The slopes are estimated in every case; for the intercepts we use the \emph{home-boost} model \eqref{alpha.sym->alpha} with $\eta$ free. Table~\ref{tab:params} shows the estimates except for the intercepts $\hat\balpha^\tnr{h}$ as they do not affect the ranking (Sec.~\ref{Sec:score.order}). Every league is also refitted with $\eta=0$, which is the venue-neutral case \eqref{P.y.symmetry}. We do not report those estimates, as the refit moves $\hat\delta_y$ by at most $0.4$ standard errors. We use it to measure the effect of the venue on the ranking (Table~\ref{tab:schedule}).

\begin{table}[t]
\centering
\caption{Estimated \gls{ac} parameters: the free slopes $\hat\delta_y$ and the home-advantage parameter $\hat\eta$, in leagues identified by their points rule $\bxi$ and over the seasons indicated by their start year. Uncertainties are model-based standard errors, obtained from the curvature of the marginal likelihood at the estimate.}
\label{tab:params}
{\footnotesize\setlength{\tabcolsep}{4pt}
\begin{tabular}{llcc}
\hline
League ($\bxi$) & Seasons & free $\hat\delta_y$ & $\hat\eta$\\
\hline
Division~1 $(0\text{-}1\text{-}2)$ & 1950--1980 & none & $0.80\pm0.02$\\
Division~1 $(0\text{-}1\text{-}3)$ & 1981--1991 & none & $0.72\pm0.04$\\
EPL $(0\text{-}1\text{-}3)$ & 1992--2025$^\tnr{a}$ & none & $0.58\pm0.02$\\
Championship $(0\text{-}1\text{-}3)$ & 1993--2025$^\tnr{a}$ & none & $0.51\pm0.02$\\
Bundesliga $(0\text{-}1\text{-}2)$ & 1965--1994$^\tnr{b}$ & none & $1.13\pm0.03$\\
Bundesliga $(0\text{-}1\text{-}3)$ & 1995--2025$^\tnr{a}$ & none & $0.56\pm0.03$\\
NHL $(0\text{-}1\text{-}2\text{-}2)$ & 2005--2025$^{\tnr{a,c}}$ & $0.38\pm0.02$ & $0.23\pm0.02$\\
SHL $(0\text{-}1\text{-}2\text{-}3)$ & \begin{tabular}{@{}l@{}}1999--2003,\\2010--2025$^\tnr{a}$\end{tabular} & $0.39\pm0.04$ & $0.42\pm0.03$\\
SuperLega $(0\text{-}0\text{-}1\text{-}2\text{-}3\text{-}3)$ & 2009--2024$^\tnr{a}$ & $0.20\pm0.02$, $0.40\pm0.02$ & $0.64\pm0.08$\\
\hline
\multicolumn{4}{l}{\footnotesize $^\tnr{a}$Excluding the COVID-affected seasons 2019/20 and 2020/21.}\\
\multicolumn{4}{l}{\footnotesize $^\tnr{b}$Excluding the twenty-team 1991/92 season.}\\
\multicolumn{4}{l}{\footnotesize $^\tnr{c}$Excluding the lockout-shortened 2012/13 season.}\\\end{tabular}
}
\end{table}

\subsection{Agreement of the score and skill orders}\label{Sec:num.ranking}

Table~\ref{tab:schedule} groups the seasons by schedule structure and reports, for each group, the meeting counts $k_{i,j}$, the venue balance and the share of schedule-equivalent pairs, which together determine the pairs covered by Proposition~\ref{Prop:score.ranking.balanced}, and how many of the pairs with distinct scores are reordered. Here the score \eqref{score.definition} is formed with the fitted slopes, $s_i=\sum_y\hat\delta_y k_{y;i}$, which Lemma~\ref{Lem:score.characterization} makes sufficient for the fitted model; the league's own rule is analyzed in Sec.~\ref{Sec:num.points}.

\begin{table}[!b]
\centering
\caption{Schedule structure and the agreement of the score and skill orders.
$k_{i,j}$ lists the meeting counts in the group; ``venue-bal.'' states whether every team
hosts and visits every opponent equally often; ``sched.-equiv.'' is the share of
schedule-equivalent pairs, the two together giving the pairs covered by
Proposition~\ref{Prop:score.ranking.balanced}. ``Pairs'' counts those with distinct
scores, and the last two columns how many of them are reordered, at the fitted $\hat\eta$,
with the rate in parentheses, and after a refit with $\eta=0$. Seasons excluded as in Table~\ref{tab:params}.}
\label{tab:schedule}
{\footnotesize\setlength{\tabcolsep}{4pt}
\begin{tabular}{llccccc}
\hline
 & & & & & \multicolumn{2}{c}{reordered}\\
\cline{6-7}
League (era) & $k_{i,j}$ & venue-bal. & sched.-equiv. & pairs & $\hat\eta$ & $\eta=0$\\
\hline
football, SuperLega  & $2$       & yes & $100\%$ & $33254$ & $0$ & $0$\\
\gls{shl} 2015--2025 & $4$       & yes & $100\%$ & $816$   & $0$  & $0$\\
\gls{shl} 2010--2014 & $5$       & no  & $100\%$ & $327$   & $0$  & $0$\\
\gls{shl} 1999--2003 & $4,6$     & yes & $27\%$  & $330$   & $6\,(1.8\%)$  & $8$\\
\gls{nhl} 2005--2007 & $0,1,4,8$ & no  & $14\%$  & $1298$  & $42\,(3.2\%)$ & $42$\\
\gls{nhl} 2008--2011 & $1,2,4,6$ & no  & $1\%$   & $1734$  & $28\,(1.6\%)$ & $27$\\
\gls{nhl} 2013--2018 & $2,3,4,5$ & no  & $6\%$   & $2662$  & $29\,(1.1\%)$ & $28$\\
\gls{nhl} 2021--2025 & $2,3,4$   & no  & $1\%$   & $2476$  & $25\,(1.0\%)$ & $24$\\
\hline
\end{tabular}}
\end{table}

By Corollary~\ref{Cor:round.robin}, there cannot be any reordering under the
venue-balanced round-robins (football, SuperLega and the \gls{shl} from 2015).
Table~\ref{tab:schedule} confirms this and shows three things beyond it.

\begin{itemize}
\item No schedule-equivalent pair is reordered, venue-balanced or not, so the venue
condition of Proposition~\ref{Prop:score.ranking.balanced} is not needed in these data.
The \gls{shl} seasons of 2010--2014 are schedule-equivalent and not venue-balanced, and
none of their pairs is reordered. Pair by pair, across the \gls{nhl} and the \gls{shl},
$719$ pairs are schedule-equivalent but not venue-balanced (not shown in
Table~\ref{tab:schedule}), and none is reordered.

\item Where schedule equivalence fails, few pairs are reordered, no group exceeding
$3.2\%$ of those with distinct scores.

\item Pairs are reordered more often as the range of $k_{i,j}$ widens. In the \gls{shl} of
1999--2003 the schedule is venue-balanced while the meeting counts differ, and the
reorderings appear. Across the four \gls{nhl} formats the proportion rises from
$1.0\%$ at the narrowest range, 2021--2025 with $k_{i,j}\in\set{2,3,4}$, to $3.2\%$ at the
widest, 2005--2007 with $k_{i,j}\in\set{0,1,4,8}$, \ie some pairs never meet while others
meet eight times. The share of schedule-equivalent pairs does not predict it, 2013--2018
having $6\%$ against $1\%$ in 2008--2011 and yet reordering $1.1\%$ against $1.6\%$.

\end{itemize}

The fits add two further points, neither of them in the table.
\begin{itemize}
\item Of the $1080$ pairs finishing with the same score, the fit assigns equal skills to
the $1052$ that Proposition~\ref{Prop:score.ranking.balanced} covers and separates the $28$
that it does not. In the \gls{shl} seasons of 2010--2014, which violate the venue-balance condition of
Proposition~\ref{Prop:score.ranking.balanced}, all three ties are broken while no strict
order is disturbed.

\item The reorderings happen between teams that have similar scores. In the \gls{nhl}
the reordered pairs differ by a median of $0.4$ score-points (one overtime loss in place of a
regulation loss, worth $\hat\delta_1=0.38$) against $7.0$ for all pairs with distinct scores.
\end{itemize}

\subsection{Points rules as models}\label{Sec:num.points}

By Lemma~\ref{Lem:score.characterization} a league using a constant-sum points rule and ranking by it adopts
(implicitly) an \gls{ac} model whose slopes $\bdelta$ are the normalized score-points
$\bxi$. The \emph{uniform} rule $\xi_y=y$, which awards one point per level, is the simplest
constant-sum rule and is sufficient for its own \gls{ac} model.
Table~\ref{tab:points} reports the agreement between the skill and the score orders, by
Kendall's $\tau_b$, which penalizes tied pairs, averaged across seasons, for the league's own
rule, the uniform rule and the fitted slopes $\hat\bdelta$. It gives five readings.

\begin{table}[t]
\centering
\caption{Points rules and the ranking. Const.-sum=``yes'' is true when the league's rule
$\bxi$ satisfies $\xi_y+\xi_{L-1-y}=\tnr{const}$. The last three columns are averaged
over the seasons of Table~\ref{tab:params}, with the \gls{shl} split by era. An entry of $1$ means that the score reproduces the order
\emph{exactly}, under Corollary~\ref{Cor:round.robin} on a venue-balanced
round-robin.}
\label{tab:points}
{\footnotesize\setlength{\tabcolsep}{4pt}
\begin{tabular}{lcccc}
\hline
 & & \multicolumn{3}{c}{$\bar\tau$ with the skill order}\\
\cline{3-5}
League ($\bxi$) & const.-sum & own rule & uniform & fitted\\
\hline
Division~1 $(0\text{-}1\text{-}2)$& yes & \multicolumn{3}{c}{\rule[.45ex]{5.2em}{.4pt}\;$1$\;\rule[.45ex]{5.2em}{.4pt}}\\
Division~1 $(0\text{-}1\text{-}3)$& no  & $0.962$ & \multicolumn{2}{c}{\rule[.45ex]{2.8em}{.4pt}\;$1$\;\rule[.45ex]{2.8em}{.4pt}}\\
EPL $(0\text{-}1\text{-}3)$ & no  & $0.965$ & \multicolumn{2}{c}{\rule[.45ex]{2.8em}{.4pt}\;$1$\;\rule[.45ex]{2.8em}{.4pt}}\\
Championship $(0\text{-}1\text{-}3)$ & no  & $0.952$ & \multicolumn{2}{c}{\rule[.45ex]{2.8em}{.4pt}\;$1$\;\rule[.45ex]{2.8em}{.4pt}}\\
Bundesliga $(0\text{-}1\text{-}2)$& yes & \multicolumn{3}{c}{\rule[.45ex]{5.2em}{.4pt}\;$1$\;\rule[.45ex]{5.2em}{.4pt}}\\
Bundesliga $(0\text{-}1\text{-}3)$& no  & $0.962$ & \multicolumn{2}{c}{\rule[.45ex]{2.8em}{.4pt}\;$1$\;\rule[.45ex]{2.8em}{.4pt}}\\
NHL $(0\text{-}1\text{-}2\text{-}2)$ & no  & $0.907$ & $0.965$ & $0.968$\\
SHL 1999--2003 $(0\text{-}1\text{-}2\text{-}3)$ & yes & \multicolumn{2}{c}{\rule[.45ex]{2.6em}{.4pt}\;$0.970$\;\rule[.45ex]{2.6em}{.4pt}} & $0.964$\\
SHL 2010--2014 $(0\text{-}1\text{-}2\text{-}3)$ & yes & \multicolumn{2}{c}{\rule[.45ex]{2.6em}{.4pt}\;$0.986$\;\rule[.45ex]{2.6em}{.4pt}} & $0.995$\\
SHL 2015--2025 $(0\text{-}1\text{-}2\text{-}3)$ & yes & \multicolumn{2}{c}{\rule[.45ex]{2.6em}{.4pt}\;$0.990$\;\rule[.45ex]{2.6em}{.4pt}} & $1$\\
SuperLega $(0\text{-}0\text{-}1\text{-}2\text{-}3\text{-}3)$ & yes & $0.963$ & $0.993$ & $1$\\
\hline
\end{tabular}}
\end{table}

\begin{itemize}
\item On average, the uniform rule outperforms the league's own rule in every league
where the two differ.

\item The last column restates Table~\ref{tab:schedule}, since both compare the score
formed with the fitted slopes against the skill order. They treat ties differently.
Table~\ref{tab:schedule} counts only the reordered pairs, while $\tau_b$ also registers a
pair whose scores are tied and whose fitted skills are not. In 2010--2014 the schedule is
not venue-balanced, so Proposition~\ref{Prop:score.ranking.balanced} does not apply, the
fit separates the equally scored pairs and $\tau_b$ stops at $0.995$ although nothing is
reordered. From 2015 on Proposition~\ref{Prop:score.ranking.balanced} applies, the equally scored pairs receive equal skills and $\tau_b=1$.

\item A rule that is not constant-sum has no guarantee of exactness. The two-point
football rule is constant-sum, so Corollary~\ref{Cor:round.robin} makes its score
reproduce the skill order exactly on the double round-robin. The football
$(0\text{-}1\text{-}3)$ and \gls{nhl} $(0\text{-}1\text{-}2\text{-}2)$ rules are not, the
football rule reaching $0.952$ to $0.965$ and the \gls{nhl} rule $0.907$. Even the uniform
rule reaches only $0.965$ in the \gls{nhl}, its schedule being the furthest from
schedule-equivalent in Table~\ref{tab:schedule}.

\item SuperLega's rule is constant-sum, so its score is sufficient, and the uniform rule
still outperforms it. The estimates space its six outcomes almost evenly, while its own rule gives
equal score-points to the two worst and to the two best, $\xi_0=\xi_1$ and
$\xi_4=\xi_5$, which the estimates separate by $0.20$ each. Its score is therefore
sufficient for a model the data do not support, and constant-sum is a necessary condition
on the points rule and not a guarantee of its quality.

\item The fitted slopes and the uniform rule give almost the same agreement, the largest
difference in Table~\ref{tab:points} being $0.010$, and the fitted slopes are the higher
wherever the two differ apart from the \gls{shl} of 1999--2003. The fitted slopes are
themselves nearly uniform. In the \gls{nhl} $\hat\delta_1=0.38\pm0.02$ lies just over two standard
errors from $\tfrac13$, and in SuperLega the two vectors differ by $0.004$ in the interior
pair alone, which Table~\ref{tab:params} rounds away.

\end{itemize}

The uniform rule is thus the one to adopt when the aim is to rank by skill. It is constant-sum at every $L$, so it is
sufficient for its own \gls{ac} model, it requires no estimation, and in each of the nine
leagues of Table~\ref{tab:params} it outperforms the league's own rule unless that rule is
already the uniform one, the fitted slopes adding at most $0.01$ beyond it. The \gls{nhl} has the most to gain,
its agreement rising from $0.907$ to $0.965$, while the two-point football rule and the
\gls{shl}'s rule are the uniform rule already.

\section{Conclusions}\label{Sec:Conclusions}

Rankings in sport leagues are produced by counting points, while the statistical literature ranks by fitting an outcome model and sorting the estimated skills. We have asked how the two are related, taking the practice as given and looking for the model in which the accumulated score is the right summary of the data. Our findings are the following:

\begin{itemize}
\item The point count is a sufficient statistic for the skills if and only if the score-points are constant-sum and the outcomes follow the \gls{ac} model, whose slopes are then the score-points themselves (Lemma~\ref{Lem:score.characterization}). The football $(0\text{-}1\text{-}3)$ and \gls{nhl} $(0\text{-}1\text{-}2\text{-}2)$ rules are not constant-sum, so their scores are not sufficient statistics, while those of the \gls{shl} and SuperLega are. The model also fixes what was previously a free convention: the draw in a three-outcome sport is worth half a win.

\item The score and skill orders agree for a pair of players that meets every other opponent equally often and plays as often at home as away (Proposition~\ref{Prop:score.ranking.balanced}); this does not depend on the intercepts, on the home advantage or on the prior (Corollary~\ref{Cor:order.invariance}) and applies to all players on a venue-balanced round-robin (Corollary~\ref{Cor:round.robin}). Point counting is then the exact ranking under the fitted model, and it coincides with the Elo ranking \citep{Szczecinski26b}. This extends the theorem of \citet{Zermelo29} and \citet{Ford57} to ordinal outcomes and the venue.

\item In the nine leagues the two orders differ only where schedule equivalence fails, and the reorderings follow the meeting counts rather than the venue, reaching at most $3.2\%$ of the pairs with distinct scores in the \gls{nhl}. They occur between teams that have similar scores.

\item Constant-sum is necessary for the score to be sufficient, and SuperLega shows that it does not guarantee a good ranking. Its rule is constant-sum yet performs worse than the uniform rule $\xi_y=y$. We thus recommend the latter to a league whose aim is to rank by skill. It is constant-sum for all $L$, it needs no estimation, and it outperforms the league's own rule unless that rule is already the uniform one, \eg in the \gls{shl}. The \gls{nhl} would gain the most from it.

\end{itemize}

\citet{Tiwisina19} report empirical results consistent with these findings. On the double round-robin of the Bundesliga, counting points matches or outperforms the Elo and \gls{lop} rankings, the more elaborate schemes gaining only in the \gls{nfl} and in tennis, where the pairs do not meet equally often. Separating the two football rules on the same seasons, they find $(0\text{-}1\text{-}2)$ to explain the results about $10\%$ better than $(0\text{-}1\text{-}3)$, which agrees with Sec.~\ref{Sec:num.points} under a different criterion.

A similar analysis may be applied to other point-counting rules, to interpret them and to understand their limitations. In particular, we may let the score-points depend on the venue, so that the outcome is worth $\xi^\tnr{h}_y$ to the host and $\xi^\tnr{a}_y$ to the visitor. The score remains sufficient in that case, the two rules being linked by the reciprocity \eqref{P.h.P.a.reciprocity}, and constant-sum reduces them to the single rule that leagues use. The main limitation is in the statistical premise, because players are aware of the points rule, which then acts as an incentive. For example, the $(0\text{-}1\text{-}3)$ rule, explicitly introduced to reward offensive play, does not need to reflect the probabilistic model. Analyzing that incentive would require different tools.

\appendix
\section{Proof of Lemma~\ref{Lem:score.characterization}}\label{App:proof}

Throughout the proof the score-points are normalized so that $\xi_0=0$ and $\xi_{L-1}=1$, as in Sec.~\ref{Sec:AC.model}. For every player $i$ and at any values of the remaining skills $\btheta_{-i}$, which are nuisance parameters, the score $s_i=\sum_y\xi_y k_{y;i}$ is sufficient for the skill $\theta_i$ if and only if the log-likelihood factorizes as
\begin{align}
\label{factorization}
    \mathcal L = Q(s_i,\btheta)+R(\bk,\btheta_{-i}),
\end{align}
where $\bk=\set{k^\tnr{h}_{y;i,j}}$ collects the counts of every outcome and ordered pair, and $R$ does not involve $\theta_i$. Lemma~\ref{Lem:score.characterization} requires \eqref{factorization} at every schedule. The second term has zero gradient with respect to $\theta_i$, so \eqref{factorization} holds if and only if the counts enter $g_i=\partial\mathcal L/\partial\theta_i$ only through $s_i$ (integration in $\theta_i$ giving the converse). Here the log-likelihood counts each match once, from its host,
\begin{align}
    \mathcal L=\sum_{i,j}\sum_y k^\tnr{h}_{y;i,j}\,\ell^\tnr{h}_y(z_{i,j}),\qquad z_{i,j}=\theta_i-\theta_j.
\end{align}
The gradient is assembled as in Sec.~\ref{Sec:MAP}, here with $\dot\ell^\tnr{h}_y(\cd)$ left unspecified and without the prior,
\begin{align}
\label{Gi.gradient}
    g_i=\frac{\partial\mathcal L}{\partial\theta_i}
    =\sum_{j}\sum_y \bigl[k^\tnr{h}_{y;i,j}\,\dot\ell^\tnr{h}_y(z_{i,j})
    +k^\tnr{a}_{y;i,j}\,\dot\ell^\tnr{a}_y(z_{i,j})\bigr].
\end{align}
Since the outcome model is common to all pairs, it is enough to impose this requirement at any $i$. Keeping $s_i$ fixed, we vary the counts of the matches between $i$ and any $j$ at home, we use the home/away reciprocity, and we vary the per-venue scores; together they force conditions (i) and (ii) of Lemma~\ref{Lem:score.characterization}. The converse, that a model satisfying (i) and (ii) makes the score sufficient, is treated at the end.

First, change the home counts of $i$ against any $j$ as $\tilde k^\tnr{h}_{y;i,j}=k^\tnr{h}_{y;i,j}+\epsilon_y$, with $\epsilon_y$ keeping $\tilde k^\tnr{h}_{y;i,j}\ge 0$. Keeping constant the number of these matches ($\sum_y \epsilon_y=0$) and the score they produce ($\sum_y\xi_y \epsilon_y=0$) makes $\bepsilon$ orthogonal to $[1,\ld,1]$ and to $\bxi$. The counts being integers, only integer $\bepsilon$ are available. However, $\bxi$ is rational, so the two equations have a rational basis of solutions (which becomes integer once the denominators are cleared). Since sufficiency is required at every schedule, we may use one where $i$ hosts $j$ enough times for each such vector to keep the counts non-negative, so the admissible $\bepsilon$ span the orthogonal complement of $\tnr{span}\set{[1,\ld,1],\bxi}$ over the reals. Invariance of \eqref{Gi.gradient} requires one orthogonality more, $\sum_y \epsilon_y\,\dot\ell^\tnr{h}_y(z_{i,j})=0$, so every admissible $\bepsilon$ is orthogonal to $\tnr{span}\set{[1,\ld,1],\bxi,[\dot\ell^\tnr{h}_0(z_{i,j}),\ld,\dot\ell^\tnr{h}_{L-1}(z_{i,j})]}$. The $\bepsilon$ span $L-2$ dimensions, so that span is at most two-dimensional, and it holds the independent vectors $[1,\ld,1]$ and $\bxi$, hence equals their span and the vector of derivatives lies in it. As $z_{i,j}$ is arbitrary, this holds at every $z$:
\begin{align}
\label{affine.gradient.h}
    \dot\ell^\tnr{h}_y(z)=c^\tnr{h}(z)\,\xi_y+u^\tnr{h}(z).
\end{align}
The same change applied to the away counts of $i$ gives $\dot\ell^\tnr{a}_y(z)=c^\tnr{a}(z)\,\xi_y+u^\tnr{a}(z)$.

Second, the reciprocity links the two venues: $\dot\ell^\tnr{a}_y(z)=-\dot\ell^\tnr{h}_{L-1-y}(-z)$, which, using \eqref{affine.gradient.h} on both sides, reads
\begin{align}
\label{reciprocity.affine}
    c^\tnr{a}(z)\,\xi_y+c^\tnr{h}(-z)\,\xi_{L-1-y}=-u^\tnr{a}(z)-u^\tnr{h}(-z)=:w(z).
\end{align}
Writing \eqref{reciprocity.affine} at $y$ and at $L-1-y$ and subtracting gives $[c^\tnr{a}(z)-c^\tnr{h}(-z)](\xi_y-\xi_{L-1-y})=0$; at $y=0$ the second factor is $\xi_0-\xi_{L-1}=-1$, so $c^\tnr{a}(z)=c^\tnr{h}(-z)$ for all $z$, and the two vanish together. Substituting back, $c^\tnr{a}(z)\,(\xi_y+\xi_{L-1-y})=w(z)$ for all $y$, so either $c^\tnr{a}(\cd)\equiv0$, or the sum $\xi_y+\xi_{L-1-y}$ is constant in $y$, and the normalization fixes the constant at $\xi_0+\xi_{L-1}=1$, which is~(i). The first case is excluded, because $c^\tnr{h}(\cd)\equiv0$ makes $\dot\ell^\tnr{h}_y(z)$ independent of $y$, so $\ell^\tnr{h}_y(z)=A(z)+B_y$, and $\sum_y\mfP^\tnr{h}_y(z)=1$ then forces $A(z)$ to be constant, contradicting the non-triviality of the model. Condition~(i) comes from the swap of the two players, $y\mapsto L-1-y$ and $z\mapsto-z$, rather than from the venue. A venue-neutral model gives the same equation and the same conclusion.

Third, 
substituting \eqref{affine.gradient.h} and its away counterpart into \eqref{Gi.gradient} yields
\begin{align}
\label{gradient.split}
    g_i=\sum_j \bigl[c^\tnr{h}(z_{i,j})\,s^{\tnr{h},(j)}_i+c^\tnr{a}(z_{i,j})\,s^{\tnr{a},(j)}_i\bigr]
    +\sum_j \bigl[u^\tnr{h}(z_{i,j})\,k^\tnr{h}_{i,j}+u^\tnr{a}(z_{i,j})\,k^\tnr{a}_{i,j}\bigr],
\end{align}
where $s^{\tnr{h},(j)}_i=\sum_y\xi_y k^\tnr{h}_{y;i,j}$ and $s^{\tnr{a},(j)}_i=\sum_y\xi_y k^\tnr{a}_{y;i,j}$ are the per-venue, per-opponent scores, and the second sum is free of the outcomes. These scores add up to $s_i$, so writing one of them, say $s^{\tnr{h},(j_0)}_i$, as $s_i$ less all the others turns the first sum in \eqref{gradient.split} into $c^\tnr{h}(z_{i,j_0})\,s_i$ plus each remaining per-venue score weighted by its own $c^\tnr{h}(z_{i,j})$ or $c^\tnr{a}(z_{i,j})$ less $c^\tnr{h}(z_{i,j_0})$. On a schedule with enough matches against each opponent, 
those scores still vary at fixed $s_i$, so every weight vanishes and $c^\tnr{h}(z_{i,j})=c^\tnr{a}(z_{i,j})=c^\tnr{h}(z_{i,j_0})$
. As the skill differences are arbitrary, $c^\tnr{h}(z)=c^\tnr{a}(z)=c$, a constant, and $c\neq0$ by the non-triviality argument above.

Integrating $\dot\ell^\tnr{h}_y(z)=c\,\xi_y+u^\tnr{h}(z)$ in $z$ and imposing $\sum_y\mfP^\tnr{h}_y(z)=1$ gives $\mfP^\tnr{h}_y(z)=\e^{\alpha^\tnr{h}_y+c\,\xi_y z}/\sum_l\e^{\alpha^\tnr{h}_l+c\,\xi_l z}$, with the intercepts $\alpha^\tnr{h}_y$ arising as integration constants, hence unconstrained. Requiring that higher skills make higher outcomes more probable gives $c>0$, and absorbing $c$ into the scale of the skills yields~(ii). The away model is then $\mfP^\tnr{a}_y(z)\propto\e^{\alpha^\tnr{h}_{L-1-y}+(\xi_y-1)z}$; absorbing the factor $\e^{-z}$, which does not depend on $y$, into the normalization leaves the slopes $\xi_y$ unchanged, so the two venues differ only in the intercepts, $\alpha^\tnr{a}_y=\alpha^\tnr{h}_{L-1-y}$. Writing the slopes as $\delta_y=\xi_y$, as in the main text, gives the form \eqref{AC.model}.

Conversely, Sec.~\ref{Sec:MAP} shows that for a model satisfying (i) and (ii) the outcomes enter \eqref{Gi.gradient} only through $s_i$, so \eqref{factorization} holds and the score is sufficient. The changes used above are therefore enough. They force the \gls{ac} form, which is invariant under every score-preserving change, so no other change constrains the model.

\begingroup\setstretch{1}

\endgroup

\end{document}